\documentclass[11pt,reqno]{amsart}
\usepackage{amssymb,amsmath,amsfonts}
\usepackage{mathrsfs}
\usepackage[margin=1in]{geometry}
\usepackage{graphicx}
\usepackage{relsize}
\usepackage{cite}
\usepackage{color,xcolor}
\definecolor{cobalt}{RGB}{61,99,181}
\usepackage[colorlinks, citecolor=cobalt, linkcolor=cobalt]{hyperref}

\allowdisplaybreaks

\newtheorem{thm}{Theorem}[section]

\newtheorem{lem}[thm]{Lemma}
\newtheorem{rem}[thm]{Remark}

\newtheorem{prop}[thm]{Proposition}

\newtheorem{prob}[thm]{Problem}

\numberwithin{equation}{section}
\def\D{\mathbb{D}}
\def\T{\mathbb{T}}

\makeatletter
\newcommand{\rmnum}[1]{\romannumeral #1}
\newcommand{\Rmnum}[1]{\expandafter\@slowromancap\romannumeral #1@}
\@namedef{subjclassname@2020}{\textup{2020} Mathematics Subject Classification}
\makeatother

\begin{document}

\title[Cauchy singular integral operators]{Essentially semi-commuting singular integral operators with Cauchy kernel on $L^{2}$}

\author[Chongchao Wang]{Chongchao Wang}
\address{College of Mathematics and Physics, Wenzhou University, Wenzhou 325035, People's Republic of China}
\email{chchaowang@wzu.edu.cn}

%\author[Xianfeng Zhao]{Xianfeng Zhao\textsuperscript{2,3}}
%\address{\textsuperscript{2} College of Mathematics and Statistics, Chongqing University, Chongqing 401331, People's Republic of China  and  Key Laboratory of Nonlinear Analysis and its Applications (Chongqing University), Ministry of Education}
%\address{\textsuperscript{3} Key Laboratory of Nonlinear Analysis and its Applications (Chongqing University), Ministry of Education, Chongqing 401331, People's Republic of China}
%\email{xianfengzhao@cqu.edu.cn}

\date{\today}
\subjclass[2020]{45E10, 47B32, 47B35}
\keywords{Cauchy singular integral operator, Toeplitz operator, Hardy space, essentially semi-commuting, essentially normal}
\thanks{The author was supported by Zhejiang Provincial Natural Science Foundation (grant number: ZCLQ24A0101). }
\begin{abstract}
In this paper, we completely characterize when the semi-commutator of two Cauchy singular integral operators on $L^2$ is compact. Our main idea  is to study Cauchy singular integral operators via  Hankel operators,  Toeplitz operators  and function algebras. Moreover, we obtain a necessary and sufficient condition for Cauchy singular integral operators to be essentially normal. 
\end{abstract}

\maketitle

\section{Introduction}
Let $\D$ be the open unit disk and $\partial \mathbb D$ be its boundary. Let $L^2$ denote the Lebesgue space of square integrable functions on the unit circle $\partial \mathbb D$. The Hardy space $H^2$ is the closed subspace of $L^2$, which is spanned by the space of analytic polynomials. Thus there is an orthogonal projection $P$ from $L^2$ onto $H^2$. For $\varphi$ in $L^\infty$, the space of essentially bounded  measurable functions on  $\partial \mathbb D$, the Toeplitz operator $T_\varphi$ and the Hankel operator $H_\varphi$ with symbol $\varphi$ on $H^2$ are defined by
$$T_\varphi f=P(\varphi f)$$
and
$$H_\varphi f=(I-P)(\varphi f)$$
for $f\in H^2$, respectively. Moreover, the dual Toeplitz operator $S_\varphi$ on $(H^2)^{\bot}$  is defined by
$$S_\varphi h= (I-P)(\varphi h),  \ \ \ \  h\in (H^2)^{\bot}.$$
For more information on the topics of Toeplitz and Hankel operators we refer to \cite{Dou, Zhu}.

For $f, g\in L^{\infty}$, the singular integral operator $S_{f,g}$  with Cauchy kernel on $L^{2}$ is defined by
$$S_{f,g}h=fPh+g\left(I-P\right)h,\~~~\~~~\left(h\in L^2\right).$$ 
Then 
$$\left(S_{f,g}h\right)(z)=\frac{f(z)+g(z)}{2}h(z)+\frac{f(z)-g(z)}{2}\frac{1}{\pi i}\int_{\T}\frac{h(\tau)}{\tau-z}d\tau,$$
where the integral is understood in the sense of Cauchy’s principal value, see \cite[Vol. I, p.12]{GoK} for the details.

The Cauchy singular integral operator plays an important role in function theory and operator theory. There are many significant properties of such operators have been studied over the past four decades in \cite{Na1,Na2,Na3,NT,NY1,NY2,NY3,NY4,Ya1,Ya2}, such as the norm, invertibility, boundedness, etc. In 2014, Nakazi and Yamamoto \cite{NY} characterized the normal and self-adjoint Cauchy singular integral operators,  which have analogous properties to the Toeplitz operators. Besides, some other algebraic properties were studied   
by Gu \cite{Gu}, Samanta and Sarka \cite{SS}. In particular, they concretely characterized commutativity of two Cauchy singular integral operators. 

The problem of when the semi-commutator or commutator of two operators  is compact on function spaces  has been investigated by many people.   The beautiful Axler-Chang-Sarason-Volberg theorem (\cite{ACS}, \cite{V}) states that   the semi-commutator $ T_{f}T_{g}-T_{fg}$  of two Hardy Toeplitz operators $T_f$ and $T_g$ is compact if and only if  either $\overline{f}$ or $g$ is in $H^\infty$ on each \emph{support set} (which will be introduced in the next section). An elementary characterization for the compactness of the semi-commutator of two Hardy Toeplitz operators in terms of Hankel operators was obtained by Zheng \cite{Zheng}. The compactness for the semi-commutator of two Toeplitz operators on other analytic function spaces has been studied in \cite{GuZ}, \cite{GSZ} and \cite{Zheng1}. In 1999, Gorkin and Zheng \cite{GpZ}   completely characterized the compact commutator $T_fT_g-T_gT_f$ of two Toeplitz operators on the Hardy space in terms of Douglas algebras or support sets.   More precisely, the characterization in \cite{GpZ} can be stated as follows: two Toeplitz operators are essentially commuting if and only if either the restrictions of their symbols on each support set $S$ are in $H^\infty|_{S}$, or the restrictions of the conjugations of their symbols on each $S$ belong to $H^\infty|_{S}$, or a nontrivial linear combination of the restrictions of their symbols on  each support set $S$ is constant. Recently, Zou and Zhao \cite{ZZ} completely characterized the essential commutativity of two Cauchy singular integral operators on $L^{2}$.

In the present paper, we focus on the following  problem:
\begin{prob}
When is   the semi-commutator $\left[S_{f_1,g_1},S_{f_2,g_2}\right)=S_{f_1,g_1}S_{f_2,g_2}-S_{f_1f_2,g_1g_2}$ of two Cauchy singular integral operators $S_{f_1,g_1}$ and $S_{f_2,g_2}$ with $f_1$, $f_2$, $g_1$ and $g_2$ in $L^\infty$
compact?
\end{prob}

In order to study Cauchy singular integral operators, we use the useful matrix representation for Cauchy singular integral operator to establish a connection between the Toeplitz operator,  Hankel operator and Cauchy singular integral operator. Then the above essentially semi-commuting problem can be reduced to the study of the compactness of products of Toeplitz, Hankel and dual Toeplitz operators.
 The  difficult part in this paper is characterizing the compactness of the sum of the four products of Toeplitz, Hankel and dual Toeplitz operators. Our main idea here is to study singular integral operators via the characterization for the essentially commuting Hankel and Toeplitz operators \cite{GkZ1} and function algebras. The main result in this paper is the following theorem:
\begin{thm}\label{MR}
Let $f_1, f_2, g_1,g_2\in L^{\infty}$. Then the semi-commutator $S_{f_1,g_1}S_{f_2,g_2}-S_{f_1f_2, g_1g_2}$ is compact if and only if for each support set $S$, one of the following holds:\\
$(1)$ $\left(f_1-g_1\right)|_{S}=0$;\\
$(2)$ $f_2|_{S}$ and $\overline{g_2}|_{S}$ are in $H^{\infty}|_{S}$.
\end{thm}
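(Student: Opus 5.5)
The plan is to compute the semi-commutator explicitly, reduce it to a multiplication operator times two Hankel-type pieces, and then characterize compactness of each piece support set by support set.

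\textbf{Step 1: an explicit formula.} Write $Q=I-P$, so that $S_{f,g}=M_fP+M_gQ$ on $L^2$. Expanding $S_{f_1,g_1}S_{f_2,g_2}h$ and using $P+Q=I$ together with $PQ=QP=0$, the terms $f_1f_2Ph$ and $g_1g_2Qh$ cancel against $S_{f_1f_2,g_1g_2}h$. This should leave
$$S_{f_1,g_1}S_{f_2,g_2}-S_{f_1f_2,g_1g_2}=M_{u}\bigl(PM_{g_2}Q-QM_{f_2}P\bigr),\qquad u=f_1-g_1 .$$
The operator $M_u QM_{f_2}P$ lives on $H^2$ and is $M_uH_{f_2}$. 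The operator $M_uPM_{g_2}Q$ lives on $(H^2)^\perp$ and equals $M_uH_{\overline{g_2}}^*$. Since these two pieces act on orthogonal subspaces, the semi-commutator is compact if and only if both $M_uH_{f_2}$ and $M_uPM_{g_2}Q$ are compact.

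For the second piece, let $J$ be the unitary flip $(Jh)(z)=\bar z h(\bar z)$, which interchanges $H^2$ and $(H^2)^\perp$. Conjugating by $J$ (and taking adjoints where needed) should transform $M_uPM_{g_2}Q$ into an operator of the form $M_{\tilde u}H_{\tilde g}$. Here $\tilde u=0$ on $S$ exactly when $u=0$ on the corresponding support set. Likewise $\tilde g\in H^\infty$ there exactly when $\overline{g_2}\in H^\infty$ there. So everything reduces to the following claim.

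\textbf{Claim.} For $u,f\in L^\infty$, $M_uH_f$ is compact if and only if, for every support set $S$, either $u|_S=0$ or $f|_S\in H^\infty|_S$.

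Applying the claim to the pair $(u,f_2)$ and to the flipped pair coming from $(u,g_2)$ gives Theorem~\ref{MR}. One subtlety is that conditions (1) and (2) are combined per support set rather than independently. This works because each of the two pieces gives the alternative ``$u|_S=0$ or the relevant symbol is in $H^\infty|_S$'' on the same $S$, and the conjunction of these is exactly (1) or (2).

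\textbf{Step 2: necessity in the claim.} Let $k_\lambda$ be the normalized reproducing kernels. If $M_uH_f$ is compact, then $\|uH_fk_\lambda\|\to0$ as $|\lambda|\to1$. Writing $\varphi_\lambda$ for the Möbius map, this norm should equal $\|(u\circ\varphi_\lambda)\,(I-P)(f\circ\varphi_\lambda)\|_2$. Now let $\lambda$ tend along a net to a point $m$ of $M(H^\infty)\setminus\D$ whose support set is $S$, and use Hoffman's theory of the maps $\varphi_m$. Passing to weak-star limits should give $(u\circ\varphi_m)\cdot(I-P)(f\circ\varphi_m)=0$ almost everywhere on the circle.

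The factor $(I-P)(f\circ\varphi_m)$ is a conjugate of an $H^2_0$ function. Such a function vanishes on a set of positive measure only if it is identically zero. Hence either $u\circ\varphi_m=0$ or $f\circ\varphi_m\in H^2$, which translates to $u|_S=0$ or $f|_S\in H^\infty|_S$. I expect the main obstacle to be the limit passage for the product $(u\circ\varphi_\lambda)\cdot(I-P)(f\circ\varphi_\lambda)$, since weak-star convergence of each factor does not directly give convergence of the product. I would first try to handle it by Sarason's trick of computing $\|uH_fk_\lambda\|^2=\langle |u|^2H_fk_\lambda,H_fk_\lambda\rangle$ via Berezin transforms of $|u|^2$, $|u|^2f$ and $|u|^2|f|^2$, which do converge along nets to $m$.

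\textbf{Step 3: sufficiency in the claim.} If the condition holds on every support set, I would show that $H_f^*M_{|u|^2}H_f$ is compact. Writing $M_{|u|^2}$ in block form on $H^2\oplus(H^2)^\perp$, this operator equals $H_f^*S_{|u|^2}H_f$. Using the Gorkin--Zheng and Axler--Chang--Sarason--Volberg localization results, together with the essentially commuting Hankel--Toeplitz characterization of \cite{GkZ1}, the compactness of $H_f^*S_{|u|^2}H_f$ is decided locally on support sets. On each support set $S$, either $|u|^2$ vanishes or $f$ is analytic, so the local obstruction vanishes, and a Volberg-type patching argument should give global compactness. If this patching proves awkward, the fallback is to express $S_{|u|^2}H_f=H_{|u|^2f}-H_{|u|^2}T_f$ plus compact terms, and invoke the known support-set criterion for compactness of $H_{\bar a}^*H_b=T_{\bar ab}-T_{\bar a}T_b$.
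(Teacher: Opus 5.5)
\paragraph{The gap is in Step~2; the reduction and Step~3 fallback are sound.} Your reduction is correct: the semi-commutator equals $M_u(PM_{g_2}Q-QM_{f_2}P)$ with $u=f_1-g_1$. Its compactness is equivalent to compactness of $M_uH_{f_2}$ and $M_uH^*_{\overline{g_2}}$; this is Lemma~\ref{lem2} with the four operators grouped in pairs. The per-support-set bookkeeping is also right. Your Step~3 fallback is a valid sufficiency proof, and shorter than the paper's. The identity $S_{|u|^2}H_f=H_{|u|^2f}-H_{|u|^2}T_f$ holds exactly, so $(M_uH_f)^*(M_uH_f)=H_f^*H_{|u|^2f}-H_f^*H_{|u|^2}T_f$. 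Under your hypothesis, Axler--Chang--Sarason--Volberg makes both $H_f^*H_{|u|^2f}$ and $H_f^*H_{|u|^2}$ compact.

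Step~2 fails in two ways. First, the Berezin-transform device does not compute $\|uH_fk_\lambda\|_2^2$. Expanding $H_fk_\lambda=fk_\lambda-T_fk_\lambda$ produces $\langle T_{\bar f}T_{|u|^2f}k_\lambda,k_\lambda\rangle$ and $\langle T_{\bar f}T_{|u|^2}T_fk_\lambda,k_\lambda\rangle$. These are Berezin transforms of operator products, not of $|u|^2$, $|u|^2f$, $|u|^2|f|^2$. Equivalently, $P(f\circ\varphi_\lambda)$ converges only weakly along nets, so the identity $(u\circ\varphi_m)(I-P)(f\circ\varphi_m)=0$ is never obtained.

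Second, even granting that identity, it does not translate back to $S_m$:
\begin{itemize}
\item If $m$ lies in a one-point Gleason part, $\varphi_m$ is constant and the identity is vacuous. Yet one-point parts need not lie in $\mathcal M(L^\infty)$, so $S_m$ need not be a singleton and the condition on $S_m$ is not automatic.
\item In nontrivial parts, Lemma~\ref{lem9} needs the norm condition $\varliminf_{z\to m}\|H_fk_z\|_2=0$. The statement $(I-P)(f\circ\varphi_m)=0$ only says that the weak limit of $(I-P)(f\circ\varphi_{\lambda_\alpha})$ vanishes, and these functions have norms exactly $\|H_fk_{\lambda_\alpha}\|_2$.
\item Similarly, $u\circ\varphi_m=0$ does not force $\int_{S_m}|u|^2\,d\mu_m=0$, which is what $u|_{S_m}=0$ means.
\end{itemize}

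\paragraph{How to fix Step~2.} The missing idea is to localize through the norms $\|H_{\cdot}k_z\|_2$, as the paper does, rather than through $\varphi_m$.
\begin{itemize}
\item Split $M_uH_f$ into $PM_uH_f=H^*_{\bar u}H_f$ and $QM_uH_f=S_uH_f$.
\item Lemmas~\ref{lem5} and \ref{lem6} applied to the first give $\|H_{\bar u}k_z\|_2\|H_fk_z\|_2\to0$ as $z\to m$.
\item Since $VS_uH_fV=T_{\bar u}H_f^*$, the operator $H_fT_u$ is compact. Lemmas~\ref{lem7} and \ref{lem8} then give $\|H_uk_z\|_2\|H_fk_z\|_2\to0$.
\item By Lemma~\ref{lem9}, on each $S_m$ either $f|_{S_m}\in H^\infty|_{S_m}$ or $u|_{S_m}$ is a constant $c$.
\item In the latter case, compactness of $S_uH_f$ gives $\|S_uH_fk_z\|_2\to0$. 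Also $S_uH_fk_z=H_{uf}k_z-H_uT_fk_z$, and $\|H_uT_fk_z\|_2\to0$ by Lemma~\ref{lem12}. Hence $(cf)|_{S_m}\in H^\infty|_{S_m}$, which forces $c=0$ unless $f|_{S_m}\in H^\infty|_{S_m}$.
\end{itemize}
This is Proposition~\ref{prop11} in your notation. It works for every $m\in\mathcal M(H^\infty+C)$, including one-point parts, because Lemma~\ref{lem9} is stated in terms of $\|H_fk_z\|_2$. As a side remark, conjugating by the anti-unitary $V$ instead of your $J$ sends $M_uPM_{g_2}Q$ to $M_{\bar u}QM_{\overline{g_2}}P$. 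That spares you checking that the reflection $z\mapsto\bar z$ permutes support sets.
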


Theorem \ref{MR} is analogous to the characterization for the semi-commutativity of two Cauchy singular integral operators (see \cite{SS}).

We organize this paper as follows. In Section 2, we introduce some notations and necessary lemmas. As the proof of Theorem \ref{MR} is long, it is divided into the necessary part and the sufficient part in Section 3. In Section 4, we  characterize the essentially normality of Cauchy singular integral operator.

\section{Notations and preliminaries}
In this section, we introduce some notations and include some important lemmas. Let us  begin with the following matrix representation for the Cauchy singular integral operator on $L^{2}$, which is introduced in \cite{NY}.

\begin{lem}\label{lem1}\cite[Lemma 1.1]{NY}
Let $f$ and $g$ be functions in $L^{\infty}$. Then $L^2=H^2\oplus \left(H^2\right)^{\bot}$ is a decomposition
such that, as matrices relative to this decomposition,
\[S_{f,g}=
\left(\begin{array}{cccc}
    T_{f} & H_{\overline{g}}^* \\
    H_{f} & S_{g}
\end{array}\right),\ \ \ \
S^{*}_{f,g}=\left(\begin{array}{cccc}
    T_{\overline{f}} & H_{f}^* \\
    H_{\overline{g}} & S_{\overline{g}}
\end{array}\right).
\]
\end{lem}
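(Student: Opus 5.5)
The plan is to compute the four blocks of $S_{f,g}$ directly from the definitions, using only the orthogonal decomposition $L^2=H^2\oplus (H^2)^{\bot}$, and then obtain the representation of $S^*_{f,g}$ by taking the block adjoint. No deep input is needed; the only step that takes a little care is identifying the upper-right entry as an adjoint of a Hankel operator.

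First, write an arbitrary $h\in L^2$ as $h=h_1+h_2$ with $h_1=Ph\in H^2$ and $h_2=(I-P)h\in (H^2)^{\bot}$. Then by definition $S_{f,g}h=fh_1+gh_2$. Apply $P$ and $I-P$ to each summand:
$$P(fh_1)=T_fh_1,\qquad (I-P)(fh_1)=H_fh_1,\qquad (I-P)(gh_2)=S_gh_2.$$
These give the left column and the lower-right entry immediately from the definitions of $T_f$, $H_f$ and $S_g$.

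The remaining entry is $P(gh_2)$ as an operator from $(H^2)^{\bot}$ to $H^2$. I would identify it by duality. For $u\in H^2$,
$$\langle P(gh_2),u\rangle=\langle gh_2,u\rangle=\langle h_2,\overline{g}u\rangle=\langle h_2,(I-P)(\overline{g}u)\rangle=\langle h_2,H_{\overline{g}}u\rangle=\langle H_{\overline{g}}^*h_2,u\rangle .$$
The third equality holds because $h_2\perp H^2$. Since $u\in H^2$ is arbitrary, $P(gh_2)=H_{\overline{g}}^*h_2$. This completes the matrix for $S_{f,g}$.

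For $S^*_{f,g}$, I would take the adjoint of the $2\times 2$ operator matrix, that is, transpose it and take adjoints entrywise. This uses $T_f^*=T_{\overline f}$ on $H^2$, which follows from $\langle P(fu),v\rangle=\langle u,\overline f v\rangle=\langle u,P(\overline f v)\rangle$ for $u,v\in H^2$. The analogous computation on $(H^2)^\bot$ gives $S_g^*=S_{\overline g}$. Finally $(H_{\overline g}^*)^*=H_{\overline g}$, and $H_f$ moves to the upper-right position as $H_f^*$. The resulting matrix is exactly the one claimed.
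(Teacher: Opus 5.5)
Your proof is correct and complete. The left column and the lower-right entry come straight from the definitions, and your duality computation correctly identifies $P(gh_2)$ with $H_{\overline{g}}^*h_2$; the key point is that $h_2\perp H^2$ makes $\langle h_2,P(\overline{g}u)\rangle=0$. The block adjoint, together with $T_f^*=T_{\overline{f}}$ and $S_g^*=S_{\overline{g}}$, then gives the stated matrix for $S_{f,g}^*$.

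The paper does not prove this lemma itself. It quotes it from Nakazi and Yamamoto (their Lemma 1.1), and your block-by-block computation is the standard verification, so nothing is missing.
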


In view of the matrix representation in the above lemma, the essentially semi-commuting  problem for two Cauchy singular integral operators can be easily transformed into the compactness of the following four classical operators.

\begin{lem}\label{lem2}
Let $f_1$, $f_2$, $g_1$ and $g_2$ be functions in $L^{\infty}$. Then the semi-commutator 
$$S_{f_1,g_1}S_{f_2,g_2}-S_{f_1f_2, g_1g_2}$$
is compact if and only if $H^{*}_{\overline{\left(g_1-f_1\right)}}H_{f_2}$, $T_{\left(f_1-g_1\right)}H^{*}_{\overline{g_2}}$, $S_{\left(g_1-f_1\right)}H_{f_2}$ and $H_{\left(f_1-g_1\right)}H^{*}_{\overline{g_2}}$ are compact.
\end{lem}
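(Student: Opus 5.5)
The plan is a direct computation. Using the matrix representation of Lemma \ref{lem1}, I will write the semi-commutator as a $2\times 2$ operator matrix relative to $L^2=H^2\oplus (H^2)^{\bot}$. I will then identify each entry with one of the four listed operators, and finally use that an operator matrix is compact if and only if each of its entries is compact.

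\emph{Step 1: product identities.} Each of $T_{\varphi\psi}$, $H_{\varphi\psi}$, $S_{\varphi\psi}$ and $H^*_{\overline{\varphi\psi}}$ is a compression of $M_{\varphi\psi}=M_\varphi M_\psi$. Inserting $I=P+(I-P)$ between $M_\varphi$ and $M_\psi$ gives, for $\varphi,\psi\in L^\infty$,
\begin{align*}
T_{\varphi\psi}&=T_\varphi T_\psi+H^*_{\overline{\varphi}}H_\psi, &
H_{\varphi\psi}&=H_\varphi T_\psi+S_\varphi H_\psi,\\
H^*_{\overline{\varphi\psi}}&=T_\varphi H^*_{\overline{\psi}}+H^*_{\overline{\varphi}}S_\psi, &
S_{\varphi\psi}&=S_\varphi S_\psi+H_\varphi H^*_{\overline{\psi}}.
\end{align*}
Here I use that $H^*_{\overline{\psi}}h=P(\psi h)$ for $h\in(H^2)^\bot$. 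For example, for $h\in (H^2)^\bot$,
$$P(\varphi\psi h)=P\varphi P(\psi h)+P\varphi(I-P)(\psi h)=T_\varphi H^*_{\overline{\psi}}h+H^*_{\overline{\varphi}}S_\psi h.$$

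\emph{Step 2: multiply the matrices.} By Lemma \ref{lem1}, $S_{f_i,g_i}$ has the matrix with entries $T_{f_i}$, $H^*_{\overline{g_i}}$, $H_{f_i}$, $S_{g_i}$. I multiply the two matrices and subtract the matrix of $S_{f_1f_2,g_1g_2}$, whose entries are $T_{f_1f_2}$, $H^*_{\overline{g_1g_2}}$, $H_{f_1f_2}$, $S_{g_1g_2}$. Expanding these four operators by Step 1, in the forms $T_{f_1f_2}$, $H_{f_1f_2}$ with $\varphi=f_1$ and $H^*_{\overline{g_1g_2}}$, $S_{g_1g_2}$ with $\varphi=g_1$, the terms cancel entrywise. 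The result is
$$
\begin{pmatrix}
H^*_{\overline{g_1}}H_{f_2}-H^*_{\overline{f_1}}H_{f_2} & T_{f_1}H^*_{\overline{g_2}}-T_{g_1}H^*_{\overline{g_2}}\\
H_{f_1}T_{f_2}\ \text{cancels};\ S_{g_1}H_{f_2}-S_{f_1}H_{f_2} & H_{f_1}H^*_{\overline{g_2}}-H_{g_1}H^*_{\overline{g_2}}
\end{pmatrix}.
$$
By linearity in the symbol, the four entries are $H^*_{\overline{(g_1-f_1)}}H_{f_2}$, $T_{(f_1-g_1)}H^*_{\overline{g_2}}$, $S_{(g_1-f_1)}H_{f_2}$ and $H_{(f_1-g_1)}H^*_{\overline{g_2}}$.

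\emph{Step 3: compactness.} If all four entries are compact, then the semi-commutator is a sum of four compact operators, each composed with the bounded projections and inclusions of the decomposition, so it is compact. Conversely, if the semi-commutator $A$ is compact, then each entry is compact, since each is of the form $P_1AP_2$ restricted to a summand, where $P_1,P_2\in\{P,I-P\}$.

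The only place needing care is the bookkeeping in Step 1. In particular, one must correctly identify $H^*_{\overline{\psi}}$ as the compression $P M_\psi|_{(H^2)^\bot}$ so that the off-diagonal cancellations work. I expect no genuine obstacle beyond this.
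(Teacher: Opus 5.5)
Your proposal is correct and follows the paper's own proof. Both multiply the block matrices from Lemma \ref{lem1}, cancel terms entrywise using the same four product identities, and conclude that the operator matrix is compact if and only if each entry is compact. Your derivation of those identities by inserting $I=P+(I-P)$ and using $H^*_{\overline{\psi}}h=P(\psi h)$ fills in what the paper only states; the stray text in your $(2,1)$ matrix entry is just a typo.
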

\begin{proof}
We have by the Lemma \ref{lem1} that
\begin{align*}
&\ \ \ \ S_{f_1,g_1}S_{f_2,g_2}-S_{f_1f_2, g_1g_2}\\
&=\left (\begin{matrix}
  T_{f_1} & H_{\overline{g_1}}^* \\
    H_{f_1} & S_{g_1}
  \end{matrix}\right)\left (\begin{matrix}
    T_{f_2} & H_{\overline{g_2}}^* \\
    H_{f_2} & S_{g_2}
  \end{matrix}\right )-\left(\begin{matrix}
   T_{f_1f_2} & H_{\overline{g_1g_2}}^* \\
    H_{f_1f_2} & S_{g_1g_2}
  \end{matrix}\right)\\
  &=\left (\begin{matrix}
  T_{f_1}T_{f_2}+ H_{\overline{g_1}}^{*}H_{f_2}-T_{f_1f_2}   &  T_{f_1}H_{\overline{g_2}}^{*}+ H_{\overline{g_1}}^{*}S_{g_2}-H_{\overline{g_1g_2}}^* \\
  H_{f_1}T_{f_2}+S_{g_1}H_{f_2}-H_{f_1f_2} & H_{f_1}H_{\overline{g_2}}^{*}+S_{g_1}S_{g_2}-S_{g_1g_2}
  \end{matrix}\right)\\ &=\left (\begin{matrix}
  H_{\overline{g_1}}^{*}H_{f_2}-H_{\overline{f_1}}^{*}H_{f_2} &  T_{f_1}H_{\overline{g_2}}^{*}-T_{g_1}H_{\overline{g_2}}^{*} \\
  S_{g_1}H_{f_2}-S_{f_1}H_{f_2} & H_{f_1}H_{\overline{g_2}}^{*}-H_{g_1}H_{\overline{g_2}}^{*}
  \end{matrix}\right)\\ &=\left (\begin{matrix}
  H_{\overline{\left(g_1-f_1\right)}}^{*}H_{f_2} &  T_{\left(f_1-g_1\right)}H_{\overline{g_2}}^{*} \\
  S_{\left(g_1-f_1\right)}H_{f_2} & H_{\left(f_1-g_1\right)}H_{\overline{g_2}}^{*}
  \end{matrix}\right).
 \end{align*}
 The penultimate equality follows from
 $$T_{f_1f_2}-T_{f_1}T_{f_2}=H^{*}_{\overline{f_1}}H_{f_2},\ H^{*}_{\overline{g_1g_2}}-H^{*}_{\overline{g_1}}S_{g_2}=T_{g_1}H^{*}_{\overline{g_2}},$$
 $$H_{f_1f_2}-H_{f_1}T_{f_2}=S_{f_1}H_{f_2},\ S_{g_1g_2}-S_{g_1}S_{g_2}=H_{g_1}H^{*}_{\overline{g_2}}.$$
Therefore  $S_{f_1,g_1}S_{f_2,g_2}-S_{f_1f_2, g_1g_2}$ is compact if and only if $H^{*}_{\overline{\left(g_1-f_1\right)}}H_{f_2}$, $T_{\left(f_1-g_1\right)}H^{*}_{\overline{g_2}}$, $S_{\left(g_1-f_1\right)}H_{f_2}$ and $H_{\left(f_1-g_1\right)}H^{*}_{\overline{g_2}}$  are compact.
\end{proof}

To study the compactness of  products of Hankel and Toeplitz operators on the Hardy space, the following operator $V$ is very useful.
Define the operator  $V:\ L^2\rightarrow L^2$ by $$Vf(z)=\overline{zf(z)}, \ \ \  f\in L^2, \ z\in \partial \mathbb D.$$
It is easy to check that  $V$ is anti-unitary and moreover,
 $$V=V^{-1}=V^*$$
 on  $L^2$. For a general anti-linear operator $V$, $V^*$ is the anti-linear operator  defined via the property
 $$\overline{\langle Vf, g\rangle}=\langle f, V^*g\rangle$$
for $f$ and $g$ in $L^2$.

\begin{lem}\label{lem3}
For $f\in L^2$, then $$VP(f)=(I-P)V(f).$$
\end{lem}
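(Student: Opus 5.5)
Since $V$ is anti-unitary and $P$, $I-P$ are continuous, it suffices to check the identity on the orthonormal basis $\{z^n\}_{n\in\mathbb Z}$ of $L^2$ and then extend. There is one subtlety: $V$ is anti-linear, not linear. But $VP$ and $(I-P)V$ are both additive, both conjugate-homogeneous, and both continuous. Their difference $D=VP-(I-P)V$ is therefore a continuous anti-linear map. Such a map vanishes identically as soon as it vanishes on a set whose closed linear span is all of $L^2$: $D$ annihilates finite linear combinations $\sum c_n z^n$ because $D(\sum c_nz^n)=\sum \overline{c_n}D(z^n)$, and it annihilates limits of these by continuity.

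The key step is to compute $V(z^n)$ on the unit circle, where $\overline{z}=z^{-1}$:
$$V(z^n)=\overline{z\cdot z^n}=z^{-n-1}.$$
The map $n\mapsto -n-1$ sends $\{n\ge 0\}$ bijectively onto $\{m\le -1\}$, and $\{n\le -1\}$ onto $\{m\ge 0\}$. This is the whole content of the lemma: $V$ swaps $H^2$ and $(H^2)^{\bot}$.

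We then split into two cases.
\begin{itemize}
\item If $n\ge 0$, then $z^n\in H^2$, so $VP(z^n)=V(z^n)=z^{-n-1}$. Since $-n-1\le -1$, we have $z^{-n-1}\in (H^2)^{\bot}$, so $(I-P)V(z^n)=z^{-n-1}$ as well.
\item If $n\le -1$, then $P(z^n)=0$, so $VP(z^n)=V(0)=0$. Since $-n-1\ge 0$, we have $V(z^n)=z^{-n-1}\in H^2$, so $(I-P)V(z^n)=0$.
\end{itemize}
Hence $D(z^n)=0$ for every $n\in\mathbb Z$, and therefore $D\equiv 0$ on $L^2$.

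Every step here is routine. The only point that needs care is the anti-linearity of $V$ in the density argument, which is handled in the first paragraph. For the record, the same computation also gives the companion identity $VP=(I-P)V$ with $P$ and $I-P$ interchanged, namely $V(I-P)=PV$, by applying the lemma to $Vf$ and using $V^2=I$.
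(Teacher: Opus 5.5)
Your proof is correct and rests on the same fact as the paper's: $V$ interchanges $H^2$ and $(H^2)^{\bot}$. The paper splits $f=Pf+(I-P)f$ and uses this fact without justification, while you verify it explicitly through $V(z^n)=z^{-n-1}$ and handle the anti-linear extension by continuity correctly, so the two arguments are essentially the same.
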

\begin{proof}
For any  $f$ in $L^2$, we write $f=f_{+}+f_{-},$
where $f_{+}=Pf$ and $f_{-}=(I-P)f$. Then  we have
\begin{align*}
VP(f)(w)&=Vf_{+}(w)\\
     &=\overline{w}\overline{f_{+}(w)}\\
     &=\overline{w}\overline{f_{+}(w)}+(I-P)(\overline{w}\overline{f_{-}(w)})\\
     &=(I-P)(\overline{w}\overline{f_{+}(w)}+\overline{w}\overline{f_{-}(w)})\\
     &=(I-P)(\overline{w}\overline{f(w)})\\
     &=(I-P)V(f)(w)
\end{align*}
for each $w\in \partial\mathbb D$, to complete the proof.
\end{proof}

\begin{rem}\label{rem4}
Observe that Lemma \ref{lem3} easily leads to the following two relations:
$$VH_{\varphi}=H_{\varphi}^*V\  \ \ \  \mathrm{and} \ \ \ \ S_{\varphi}V=VT_{\overline{\varphi}},$$
 which will be used repeatedly later on.
\end{rem}

For $x$ and $y$ in $L^2$, we use $x\otimes y$ to denote the following rank-one operator: for $f\in L^2$,
$$(x\otimes y)(f)=\langle f, y\rangle x.$$

The following two lemmas about the Toeplitz and Hankel operators on $H^2$ established in \cite[Lemmas 1 and Lemma 2]{Zheng} are useful tools to study the compactness of the product of Hankel operators and compact operators in the Toeplitz algebra.
\begin{lem}\label{lem5}
Let $f$, $g$ be in $L^2$, and $z\in \D$. Then
$$H_{f}^*H_g-T_{\phi_z}^*H_{f}^*H_gT_{\phi_z}=V\big[(H_fk_z)\otimes(H_gk_z)\big]V^*.$$
\end{lem}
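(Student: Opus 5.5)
The plan is to rewrite the conjugated operator $T_{\phi_z}^*H_f^*H_gT_{\phi_z}$ as $H_f^*S_{\overline{\phi_z}}S_{\phi_z}H_g$ and to identify the defect $I-S_{\overline{\phi_z}}S_{\phi_z}$ as a rank-one operator conjugated by $V$. Throughout, $\phi_z(w)=(z-w)/(1-\bar z w)$ is the Möbius involution and $k_z$ is the normalized reproducing kernel of $H^2$ at $z$. I will treat $f,g\in L^\infty$; for $f,g\in L^2$ the same identities hold on the dense set of polynomials (or by approximation), since every step is algebraic.

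\textbf{Step 1.} Move $T_{\phi_z}$ through the Hankel operator. Since $\phi_z\in H^\infty$, we have $H_{\phi_z}=0$. The identity $H_{f_1f_2}=H_{f_1}T_{f_2}+S_{f_1}H_{f_2}$ from the proof of Lemma \ref{lem2}, applied with the two factors in either order, gives
$$H_gT_{\phi_z}=H_{g\phi_z}=S_{\phi_z}H_g.$$
Using $T_{\phi_z}^*H_f^*=(H_fT_{\phi_z})^*=(S_{\phi_z}H_f)^*=H_f^*S_{\overline{\phi_z}}$, we obtain
$$T_{\phi_z}^*H_f^*H_gT_{\phi_z}=H_f^*S_{\overline{\phi_z}}S_{\phi_z}H_g.$$

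\textbf{Step 2.} Identify the defect. Since $|\phi_z|=1$ on $\partial\D$, the relation $S_{g_1g_2}-S_{g_1}S_{g_2}=H_{g_1}H^*_{\overline{g_2}}$ with $g_1=\overline{\phi_z}$ and $g_2=\phi_z$ gives
$$I-S_{\overline{\phi_z}}S_{\phi_z}=H_{\overline{\phi_z}}H^*_{\overline{\phi_z}}.$$
Hence the left-hand side of the lemma equals $H_f^*H_{\overline{\phi_z}}H_{\overline{\phi_z}}^*H_g$.

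\textbf{Step 3.} Transport to $H^2$ via $V$. From Remark \ref{rem4} and $V=V^{-1}$ we have $VH_\varphi V=H_\varphi^*$, and therefore
$$H_{\overline{\phi_z}}H_{\overline{\phi_z}}^*=V\,H_{\overline{\phi_z}}^*H_{\overline{\phi_z}}\,V.$$
The middle factor is computed by the Toeplitz identity:
$$H_{\overline{\phi_z}}^*H_{\overline{\phi_z}}=T_{|\phi_z|^2}-T_{\phi_z}T_{\overline{\phi_z}}=I-T_{\phi_z}T_{\phi_z}^*.$$
Because $T_{\phi_z}$ is an isometry whose range is $\phi_zH^2=\{h\in H^2: h(z)=0\}$, the operator $I-T_{\phi_z}T^*_{\phi_z}$ is the projection onto the span of $k_z$, that is, $k_z\otimes k_z$.

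\textbf{Step 4.} Reassemble. Remark \ref{rem4} gives $H_f^*V=VH_f$ and $VH_g=H_g^*V$. Therefore
$$H_f^*V(k_z\otimes k_z)VH_g=VH_f(k_z\otimes k_z)H_g^*V=V\big[(H_fk_z)\otimes(H_gk_z)\big]V^*,$$
using $V^*=V$.

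\textbf{Main obstacle.} The delicate point is bookkeeping with the anti-linear $V$. One must check that it maps $H^2$ onto $(H^2)^\perp$ and back, so that each composition is between the right spaces. One must also check that conjugating a rank-one operator by an anti-linear map gives $V(x\otimes y)V=(Vx)\otimes(Vy)$ with no conjugation of scalars appearing. I would verify this directly: for any $h$,
$$V(x\otimes y)Vh=V\big(\langle Vh,y\rangle x\big)=\overline{\langle Vh,y\rangle}\,Vx,$$
and $\overline{\langle Vh,y\rangle}=\langle h,V^*y\rangle=\langle h,Vy\rangle$ by the defining property of $V^*$ together with $V^*=V$. Hence $V(x\otimes y)Vh=\langle h,Vy\rangle Vx=\big((Vx)\otimes(Vy)\big)h$. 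This is the only step I would check term by term.
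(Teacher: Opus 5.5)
Your proof is correct. The paper gives no argument of its own for this lemma; it imports it from Zheng. So you are not reproducing an in-paper proof but supplying a self-contained derivation, using only identities the paper already records (the product identities in the proof of Lemma~\ref{lem2} and Remark~\ref{rem4}). Each step checks out:
\begin{itemize}
\item $H_gT_{\phi_z}=H_{g\phi_z}=S_{\phi_z}H_g$, because $\phi_z\in H^\infty$;
\item the defect $I-S_{\overline{\phi_z}}S_{\phi_z}=H_{\overline{\phi_z}}H^*_{\overline{\phi_z}}$, using $|\phi_z|=1$;
\item conjugating by $V$ turns this defect into $I-T_{\phi_z}T^*_{\phi_z}=k_z\otimes k_z$, the projection onto $H^2\ominus\phi_zH^2$.
\end{itemize}
A genuine bonus of your route is that Steps 2--3, together with your rank-one computation $V(x\otimes y)V=(Vx)\otimes(Vy)$, give
\[
S_{\phi_z}^*S_{\phi_z}=S_{\overline{\phi_z}}S_{\phi_z}=I-(Vk_z)\otimes(Vk_z)
\]
on $(H^2)^\perp$. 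The paper uses this identity without comment in the proof of Proposition~\ref{prop14} and in (\ref{eq24}), so your argument justifies that step too.

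The only loose end is the extension to $f,g\in L^2$, which you dismiss as ``algebraic''. For unbounded symbols the identity must be read as an equality of sesquilinear forms,
\[
\langle H_gh,H_f\ell\rangle-\langle H_g\phi_zh,H_f\phi_z\ell\rangle,
\]
on a dense domain. That domain should be $H^\infty$ rather than the polynomials, since $T_{\phi_z}$ does not preserve polynomials. In Step~4, avoid adjoints of unbounded operators and use the direct computation $\langle H_gh,Vk_z\rangle=\langle h,VH_gk_z\rangle$ instead. Your approximation remark also works, since $\|H_{f_n}h-H_fh\|_2\le\|h\|_\infty\|f_n-f\|_2$ for $h\in H^\infty$, and $k_z\in H^\infty$. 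The paper only applies the lemma to bounded symbols, so nothing downstream is affected.
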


\noindent Here $$k_z(e^{i\theta})=\frac{\sqrt{1-|z|^2}}{1-\overline{z}e^{i\theta}}$$ is the normalized reproducing kernel for the Hardy space, and $\phi_z$ denotes  the M\"{o}bius map:
$$\phi_z(w)=\frac{z-w}{1-\overline{z}w} \ \ \ \ \  \big(z, w\in \mathbb D\big).$$

\begin{lem}\label{lem6}
Let $K$ be a compact operator on $H^2$. Then we have
$$\lim_{|z|\rightarrow 1^-} \|K-T_{\phi_z}^*KT_{\phi_z}\|=0.$$
\end{lem}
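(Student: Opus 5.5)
The plan is to reduce to rank-one operators, then use the pointwise behaviour of the M\"obius maps $\phi_z$ as $|z|\to1^-$ together with a compactness argument on the unit circle.

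\emph{Step 1 (reduction to finite rank).} Since $\|\phi_z\|_\infty=1$, we have $\|T_{\phi_z}\|\le 1$. Hence for any operators $K,F$ on $H^2$,
$$\|K-T_{\phi_z}^*KT_{\phi_z}\|\le \|F-T_{\phi_z}^*FT_{\phi_z}\|+2\|K-F\|.$$
Every compact operator on $H^2$ is a norm limit of finite rank operators. So it suffices to prove the statement for finite rank $F$, and by linearity for rank-one operators $F=x\otimes y$ with $x,y\in H^2$.

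\emph{Step 2 (the rank-one computation).} A direct check gives
$$T_{\phi_z}^*(x\otimes y)T_{\phi_z}=(T_{\phi_z}^*x)\otimes(T_{\phi_z}^*y)=P(\overline{\phi_z}x)\otimes P(\overline{\phi_z}y).$$
To show $\|x\otimes y-(T^*_{\phi_z}x)\otimes(T^*_{\phi_z}y)\|\to0$, I argue by sequences. Suppose the claim fails. Then there exist $\epsilon>0$ and $z_n$ with $|z_n|\to1$ violating the bound $\epsilon$. By compactness of $\overline{\D}$ we may pass to a subsequence with $z_n\to\zeta\in\partial\D$.

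For $w\in\partial\D\setminus\{\zeta\}$ we have $\phi_{z_n}(w)\to(\zeta-w)/(1-\overline{\zeta}w)$. Since $1-\overline\zeta w=\overline\zeta(\zeta-w)$, this limit equals $\zeta$. So $\overline{\phi_{z_n}}\to\overline{\zeta}$ a.e.\ on $\partial\D$, with $|\overline{\phi_{z_n}}|\le 1$. Dominated convergence then gives $\overline{\phi_{z_n}}x\to\overline\zeta x$ in $L^2$, hence $T^*_{\phi_{z_n}}x\to\overline\zeta x$, and similarly for $y$.

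Using the estimate $\|a\otimes b-c\otimes d\|\le\|a-c\|\|b\|+\|c\|\|b-d\|$, we obtain
$$(T^*_{\phi_{z_n}}x)\otimes(T^*_{\phi_{z_n}}y)\to(\overline\zeta x)\otimes(\overline\zeta y)=\overline\zeta\zeta\,(x\otimes y)=x\otimes y,$$
because $|\zeta|=1$. This contradicts the choice of the $z_n$.

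\emph{Main obstacle.} The key point is that $\phi_z$ does not converge as $|z|\to1$ without further restriction, since $z$ may wander around the circle. The subsequence argument, with $z_n\to\zeta$, handles this. The unimodular limit constant $\zeta$ then cancels in the rank-one tensor, because the conjugate appears once on each side. Combining this with Step 1 yields the lemma.
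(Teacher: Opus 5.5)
Your proof is correct and complete. The reduction $\|K-T_{\phi_z}^*KT_{\phi_z}\|\le\|F-T_{\phi_z}^*FT_{\phi_z}\|+2\|K-F\|$ uses only $\|T_{\phi_z}\|\le1$. The identity $T_{\phi_z}^*(x\otimes y)T_{\phi_z}=(T_{\phi_z}^*x)\otimes(T_{\phi_z}^*y)$ is right. Negating the limit as $|z|\to1^-$ legitimately produces a sequence. The pointwise limit $\phi_{z_n}(w)\to\zeta$ for $w\neq\zeta$ is correct, as are the dominated convergence step and the cancellation $\overline{\zeta}\zeta=1$ in the rank-one tensor.

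The paper does not prove this lemma at all; it quotes it from Zheng's work (Lemma 2 there). So the comparison is between a citation and your self-contained argument. Your argument follows the standard pattern: approximate by finite rank, then observe that on each fixed vector $T_{\phi_z}^*$ acts asymptotically like multiplication by a unimodular constant, and that constant cancels in $x\otimes y$.

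If you want to avoid the contradiction and subsequence step, there is a direct estimate that is uniform in the argument of $z$. Use the identity $\phi_z(w)-z=-(1-|z|^2)\,w/(1-\overline{z}w)$. For $x\in H^\infty$ it gives $\|T_{\phi_z}^*x-\overline{z}x\|_2\le\|(\overline{\phi_z}-\overline{z})x\|_2\le\|x\|_\infty\sqrt{1-|z|^2}$, since $\int_{\partial\D}|1-\overline{z}w|^{-2}\,\frac{|dw|}{2\pi}=(1-|z|^2)^{-1}$. Because $\|T_{\phi_z}^*-\overline{z}I\|\le2$ and $H^\infty$ is dense in $H^2$, it follows that $\|T_{\phi_z}^*x-\overline{z}x\|_2\to0$ for every $x\in H^2$. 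Hence $(T_{\phi_z}^*x)\otimes(T_{\phi_z}^*y)-|z|^2\,x\otimes y\to0$ as $|z|\to1^-$. This also makes visible why the wandering of $z$ around the circle is harmless. It is a matter of presentation only, not a gap in your argument.
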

We also need the following lemmas, which are introduced in \cite[Lemma 3.4 and Lemma 3.5]{WZZ}.
\begin{lem}\label{lem7}
Let $f$, $g$ be in $L^2$, and $z\in \D$. Then
$$H_fT_gT_{\phi_z}-S_{\phi_z}H_fT_g=H_fk_z\otimes T_{\overline{g\phi_z}}k_z=-(H_{f}k_z)\otimes(VH_{g}k_z)$$
for all $z\in \mathbb D$.
\end{lem}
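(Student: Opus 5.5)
We need to prove Lemma \ref{lem7}: for $f,g$ (bounded, say) and $z\in\D$,
$$H_fT_gT_{\phi_z}-S_{\phi_z}H_fT_g=H_fk_z\otimes T_{\overline{g\phi_z}}k_z=-(H_{f}k_z)\otimes(VH_{g}k_z).$$

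The plan is to reduce everything to the rank-one identity
$$H_\varphi T_{\phi_z}-S_{\phi_z}H_\varphi=H_\varphi k_z\otimes k_z,$$
and then precompose with $T_g$. To obtain that identity, write $H_\varphi T_{\phi_z}h=(I-P)(\varphi\phi_z h)$ for $h\in H^2$. Since $\phi_z\in H^\infty$, we also have $S_{\phi_z}H_\varphi h=(I-P)(\phi_z(I-P)(\varphi h))$. Hence the difference is
$$(I-P)\bigl(\phi_z P(\varphi h)\bigr)=H_{\phi_z}T_\varphi h.$$

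We then need $H_{\phi_z}$ on $H^2$. Writing $\phi_z=(z-w)/(1-\overline zw)$, its restriction to the circle is the inner function $\phi_z=-\overline{\,\overline{\phi_z}\,}$. The cleaner route is to note that $H_{\phi_z}u=(I-P)(\phi_z u)=0$ for analytic $u$, since $\phi_z$ is analytic. So the difference above actually vanishes, and the nonzero term must come from a different ordering. Indeed the statement concerns $S_{\phi_z}$ acting on $(H^2)^\perp$, and the rank-one term arises from $\overline{\phi_z}$. I will therefore recompute with the convention that $T_{\phi_z}$ here means multiplication and that $S_{\phi_z}$ compresses $\phi_z$ to $(H^2)^\perp$. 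Because $\phi_z$ is analytic, $(I-P)\phi_z(I-P)x=(I-P)\phi_z x-(I-P)\phi_zPx=(I-P)\phi_z x$. This would give $H_fT_gT_{\phi_z}=H_fT_{g\phi_z}$ and $S_{\phi_z}H_fT_g=(I-P)(\phi_z f P(gh))=H_{f\phi_z}T_g h$. The two differ by the commutator $P(g\phi_z h)-\phi_zP(gh)=H^*_{\overline{\phi_z}}H_g h$ (up to sign), since
$$T_{g\phi_z}-T_{\phi_z}T_g=H^*_{\overline{\phi_z}}H_g.$$
So the difference equals $H_fH^*_{\overline{\phi_z}}H_g$, and I will show this is rank one. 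The key fact is that $H^*_{\overline{\phi_z}}$ has rank one: $(I-P)(\overline{\phi_z}u)$ only retains the value at $z$. Explicitly, $\overline{\phi_z}=$ (on the circle) $(\overline z-\overline w)/(1-z\overline w)$, so $H_{\overline{\phi_z}}u=-\sqrt{1-|z|^2}\,\langle u,k_z\rangle\,\overline w\,\overline{k_z}$ up to a unimodular constant. This makes $H_{\overline{\phi_z}}=c\,(V k_z)\otimes k_z$ with $|c|=1$.

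Substituting, $H_fH^*_{\overline{\phi_z}}H_g$ becomes $\bar c\,H_f k_z\otimes (H_g^*Vk_z)$. Using Remark \ref{rem4} in the form $H_g^*V=VH_g$ (with $V=V^*$), the second factor is $VH_gk_z$, which yields $-(H_fk_z)\otimes(VH_gk_z)$ once the constant is identified as $-1$. For the middle expression, I will verify directly that $T_{\overline{g\phi_z}}k_z=P(\overline g\,\overline{\phi_z}k_z)$ equals $-VH_gk_z$, using $\overline{\phi_z}k_z=\overline{w}\,\overline{k_z}\cdot(\text{unimodular})$, which is $-V k_z$ up to sign. Then $P(\overline g\,\overline{wk_z})=P(V(gk_z))=VP^\perp(gk_z)=VH_gk_z$ by Lemma \ref{lem3}.

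The main obstacle is bookkeeping: fixing the exact unimodular constant in $\overline{\phi_z}k_z=-Vk_z$ and keeping track of antilinearity in the adjoint $\otimes$ manipulations. I will check this constant first on $z=0$, where $\phi_0=-w$, $k_0=1$ and $V1=\overline w$, so that $\overline{\phi_0}k_0=-\overline w=-V k_0$ and the sign is confirmed.
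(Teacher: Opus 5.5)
Your argument is correct in substance. The paper gives no proof of this lemma; it quotes it from \cite{WZZ}. So the comparison below is with the derivation that the middle term of the statement points to.

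Both routes begin with $S_{\phi_z}H_f=H_fT_{\phi_z}$ on $H^2$. This is exactly why your opening ``plan'' collapsed to $0$, so drop that plan from the write-up. Consequently the left side equals $H_f(T_gT_{\phi_z}-T_{\phi_z}T_g)$.

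You write this commutator as $H^*_{\overline{\phi_z}}H_g$, which is an exact identity with no sign ambiguity. You then compute the rank-one operator $H_{\overline{\phi_z}}$ and reach $-(H_fk_z)\otimes(VH_gk_z)$ via $H_g^*V=VH_g$ (Remark~\ref{rem4}). The middle expression you obtain by a separate computation.

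The more direct route uses two facts: $T_{\phi_z}$ is an isometry with $T_{\phi_z}T_{\phi_z}^*=I-k_z\otimes k_z$, and $T_{\phi_z}^*T_gT_{\phi_z}=T_g$. Together they give
\[
T_gT_{\phi_z}-T_{\phi_z}T_g=(I-T_{\phi_z}T_{\phi_z}^*)T_gT_{\phi_z}=k_z\otimes T_{\overline{g\phi_z}}k_z ,
\]
which is the middle expression with no Hankel computation. The last equality of the lemma then needs only $\overline{\phi_z}k_z=-Vk_z$ and Lemma~\ref{lem3}. The two routes rest on the same fact, since $I-T_{\phi_z}T_{\phi_z}^*=H^*_{\overline{\phi_z}}H_{\overline{\phi_z}}$. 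Yours makes the rank-one Hankel structure explicit; the other avoids computing $H_{\overline{\phi_z}}$ at all.

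Two bookkeeping points need fixing.

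\textbf{Spurious factor.} Your displayed formula $H_{\overline{\phi_z}}u=-\sqrt{1-|z|^2}\,\langle u,k_z\rangle\,\overline w\,\overline{k_z}$ carries a factor $\sqrt{1-|z|^2}$ that no unimodular constant can absorb. The correct formula is $H_{\overline{\phi_z}}u=-\langle u,k_z\rangle Vk_z=-\sqrt{1-|z|^2}\,u(z)\,Vk_z$.

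\textbf{The sign.} Checking the constant at $z=0$ does not determine it for other $z$, since a priori the unimodular factor could depend on $z$. Instead, note that $1-\overline z w=w(\overline w-\overline z)$ on $\partial\D$. Hence $\overline{\phi_z}k_z=-\overline w\,\overline{k_z}=-Vk_z$ for every $z\in\D$. This single identity settles both constants:
\begin{itemize}
\item it gives $H_{\overline{\phi_z}}k_z=-Vk_z$, while $H_{\overline{\phi_z}}$ vanishes on $k_z^{\perp}=\phi_zH^2$ because $\overline{\phi_z}\phi_z=1$ on $\partial\D$, so $H_{\overline{\phi_z}}=-(Vk_z)\otimes k_z$, i.e.\ $c=-1$;
\item it also gives $T_{\overline{g\phi_z}}k_z=-P(V(gk_z))=-VH_gk_z$.
\end{itemize}
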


\begin{lem}\label{lem8}
Let $K:H^2\rightarrow \overline{zH^2}$ be a compact operator. Then
$$\lim_{|z|\rightarrow 1^-}\|S_{\phi_z}K-KT_{\phi_z}\|=0.$$
\end{lem}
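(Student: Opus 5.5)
The plan is to reduce to rank-one operators, and then use the fact that as $|z|\to1^-$ the M\"obius maps $\phi_z$ tend to a unimodular constant in a weak sense. For the reduction, note that $\|S_{\phi_z}\|\le 1$ and $\|T_{\phi_z}\|\le 1$ for every $z\in\D$. Since $K$ is compact, for every $\varepsilon>0$ there is a finite rank operator $F:H^2\to\overline{zH^2}$ with $\|K-F\|<\varepsilon$. Then
$$\|S_{\phi_z}K-KT_{\phi_z}\|\le 2\varepsilon+\|S_{\phi_z}F-FT_{\phi_z}\|$$
uniformly in $z$. So it suffices to treat finite rank $F$, and by linearity a single rank-one operator $x\otimes y$ with $x\in\overline{zH^2}=(H^2)^\perp$ and $y\in H^2$. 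For such an operator,
$$S_{\phi_z}(x\otimes y)-(x\otimes y)T_{\phi_z}=(S_{\phi_z}x)\otimes y-x\otimes (T_{\overline{\phi_z}}y).$$

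Next I would move the first term onto $H^2$ using $V$. Write $x=Vu$ with $u\in H^2$; this is possible by Lemma \ref{lem3}, since $V$ maps $H^2$ onto $(H^2)^\perp$. By Remark \ref{rem4}, $S_{\phi_z}x=S_{\phi_z}Vu=VT_{\overline{\phi_z}}u$. The problem is then reduced to the behaviour of $T_{\overline{\phi_z}}h$ for a fixed $h\in H^2$.

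I would argue by contradiction with a subsequence. Suppose the norm does not tend to $0$. Then there are $\varepsilon_0>0$ and $z_n\in\D$ with $|z_n|\to1$ and the norm at least $\varepsilon_0$. Passing to a subsequence, $z_n\to\zeta\in\partial\D$. For fixed $w\in\D$,
$$\phi_{z_n}(w)\to\frac{\zeta-w}{1-\overline{\zeta}w}=\zeta,$$
and $|\phi_{z_n}|\le1$. Hence $\phi_{z_n}\to\zeta$ weak-$*$ in $H^\infty$, and so for $h\in H^2$ we get $\langle T_{\overline{\phi_{z_n}}}h,h\rangle=\langle h,\phi_{z_n}h\rangle\to\overline{\zeta}\|h\|^2$. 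The key step is upgrading this to norm convergence, which follows from the contraction bound:
$$\|T_{\overline{\phi_{z_n}}}h-\overline{\zeta}h\|^2\le 2\|h\|^2-2\,\mathrm{Re}\,\zeta\langle T_{\overline{\phi_{z_n}}}h,h\rangle\to0.$$

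Applying this to both $u$ and $y$, and using that $V$ is anti-linear and isometric, gives $VT_{\overline{\phi_{z_n}}}u\to V(\overline\zeta u)=\zeta x$ and $T_{\overline{\phi_{z_n}}}y\to\overline\zeta y$ in norm. Therefore the rank-one difference tends in norm to
$$\zeta\,x\otimes y-x\otimes(\overline{\zeta}y)=\zeta\,x\otimes y-\zeta\,x\otimes y=0,$$
because $\otimes$ is conjugate-linear in the second slot. Summing over finitely many rank-one terms contradicts the choice of $z_n$, and this proves the lemma.

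The only delicate point is this strong-convergence step and keeping track of conjugations under the anti-linear $V$. I expect the rest to be routine.
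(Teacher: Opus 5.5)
Your proof is correct. The paper gives no argument for this lemma; it is quoted from \cite{WZZ}. So there is no in-paper proof to match, and your write-up works as a self-contained replacement.

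Your skeleton is the standard one for results of this type (compare Lemma \ref{lem6}). You approximate $K$ by finite-rank operators using $\|S_{\phi_z}\|,\|T_{\phi_z}\|\le 1$. You then verify the identity $S_{\phi_z}(x\otimes y)-(x\otimes y)T_{\phi_z}=(S_{\phi_z}x)\otimes y-x\otimes(T_{\overline{\phi_z}}y)$. Finally you show that $\phi_z$ acts like a unimodular constant on fixed vectors.

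The last step is where you could streamline. You extract $z_n\to\zeta$, use weak-$*$ convergence and upgrade via the contraction inequality. Instead, use the identity $\phi_z(w)-z=-\sqrt{1-|z|^2}\,w\,k_z(w)$. Since $|w|=1$ on $\partial\D$ and $\|k_z\|_2=1$, this gives $\|\phi_z-z\|_2=\sqrt{1-|z|^2}$.

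Hence for bounded $h$ one has $\|(\phi_z-z)h\|_2\le\|h\|_\infty\sqrt{1-|z|^2}$. By density of bounded functions and the uniform bounds, $\|S_{\phi_z}x-zx\|\to0$ for every $x\in(H^2)^\perp$ and $\|T_{\overline{\phi_z}}y-\overline{z}y\|\to0$ for every $y\in H^2$ as $|z|\to1^-$. Here $z$ is the scalar parameter in $\D$.

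Since $x\otimes(\overline{z}y)=z\,(x\otimes y)$, the rank-one difference then tends to $0$ directly. This needs no subsequences, no weak-$*$ compactness, and no detour through $V$. Likewise, your contraction argument could be applied on $(H^2)^\perp$ directly via $\langle S_{\phi_z}x,x\rangle=\langle\phi_z x,x\rangle$.

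The trade-off is modest. Your route needs only pointwise convergence $\phi_{z_n}\to\zeta$ on $\D$. The direct estimate gives an explicit rate $O(\sqrt{1-|z|^2})$ for finite-rank operators built from bounded vectors.
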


As in \cite{Gar}, a Douglas algebra is, by definition, a closed subalgebra of $L^\infty$ which contains $H^\infty$. As Douglas algebras play a prominent role in various problems on Toeplitz and Hankel operators, we need to review some important properties of them. Observe that $H^\infty$ is a commutative Banach algebra, we can identify the maximal ideal space $\mathcal{M}(H^\infty)$ as the set of multiplicative linear functionals on $H^\infty$. Endowed with the weak star topology it inherits as a subset of the dual space of $H^\infty$, $\mathcal{M}(H^\infty)$ is a compact Hausdorff space. Identifying a point in the open unit disk $\D$ with the functional of evaluation at this point, we may regard the disk $\D$ as a subset of $\mathcal{M}(H^\infty)$. Using the Gelfand transform we regard every function in $H^\infty$ as a continuous function on $\mathcal{M}(H^\infty)$. The deepest result concerning $\mathcal{M}(H^\infty)$ is the famous corona theorem of Carleson, stating that $\D$ is dense in $\mathcal{M}(H^\infty)$ under the weak star topology (for details, see \cite{Dur} and \cite{Gar}).

It is a consequence of the Gleason-Whitney theorem that the maximal ideal space of a Douglas algebra $B$ is a naturally imbedded in $\mathcal{M}(H^\infty)$. Thus we may identify the maximal ideal space $\mathcal{M}(H^\infty+C)$ of the Sarason algebra $H^\infty+C$ with a subset of $\mathcal{M}(H^\infty)$, where $C$ is the algebra of continuous functions on $\partial\mathbb D$. A subset of $\mathcal{M}(L^\infty)$ will be a support set if it is the (closed) support of the representing measure for a functional in $\mathcal{M}(H^\infty+C)$, see \cite{Gar} and \cite{Hof} for more details. Let $m$ be in $\mathcal{M}(H^\infty+C)$ and let $d\mu_m$ denote the unique representing measure for $m$ with support $S_m$, i.e.,\\
(1)~ for all $f$ and $g$ in $H^\infty$, $$m(fg)=\int_{S_m}fg~d\mu_m=\bigg(\int_{S_m} f d\mu_m\bigg)\bigg(\int_{S_m} g d\mu_m\bigg); $$
(2)~ if $h\geqslant 0$ a.e. in $L^1(d\mu_m)$ such that $$\int_{S_m}fh~d\mu_m=\int_{S_m} f d\mu_m$$ for all $f\in H^\infty$, then we have $h=1$ a.e. $d\mu_m$.

Suppose that $m\in \mathcal{M}(H^\infty+C)$ and $z\mapsto \xi_z$ is a mapping from the unit disk $\mathbb D$ into some topological space $X$. Let $\eta$ be in $X$. We use the notation
$$\lim_{z\rightarrow m} \xi_z=\eta$$
to denote that for each open set $\mathcal{U}(\eta)\subset X$ containing $\eta$, there exists an open subset $\mathcal{O}(m)$ of $\mathcal{M}(H^\infty+C)$  containing $m$ such that $\xi_z\in \mathcal{U}$ for all $z\in \mathcal{O}(m)\cap \mathbb D$.

For a function $F$ on the disk $\D$ and $m$ in $\mathcal{M}(H^\infty+C)$, we say
$$\lim_{z\rightarrow m} F(z)=0$$
if for every net $\{z_\alpha\}\subset \D$ converging to $m$,
$$\lim_{z_\alpha\rightarrow m} F(z_\alpha)=0.$$
We shall emphasize here that we  deal with nets rather than sequences since the the topology of $\mathcal{M}(H^\infty+C)$ is not metrizable.

With the above notations and concepts about $H^2$ theory on a support set, we quote the following lemma  obtained in \cite[Lemmas 2.5 and 2.6]{GpZ}.
\begin{lem}\label{lem9}
Let $f$ be in $L^\infty$ and $m\in \mathcal M(H^\infty+C)$. Denote the support set for $m$ by $S_m$. Then
the following three conditions are equivalent:\\
$(1)$ $f|_{S_m}\in H^\infty|_{S_m}$;\vspace{1.5mm}\\
$(2)$ $\lim\limits_{z\rightarrow m}\|H_{f}k_z\|_2=0$;\vspace{1.5mm}\\
$(3)$ $\varliminf\limits_{z\rightarrow m}\|H_{f}k_z\|_2=0$.
\end{lem}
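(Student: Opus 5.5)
We prove the cycle $(1)\Rightarrow(2)\Rightarrow(3)\Rightarrow(1)$. The implication $(2)\Rightarrow(3)$ is trivial.

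\textbf{Step 1: $(1)\Rightarrow(2)$.} Choose $h\in H^\infty$ with $f|_{S_m}=h|_{S_m}$. Since $H_h=0$, we have $H_fk_z=H_{f-h}k_z$, and therefore
$$\|H_fk_z\|_2^2\le \|(f-h)k_z\|_2^2=\int_{\partial\D}|f-h|^2\,\frac{1-|z|^2}{|1-\overline z e^{i\theta}|^2}\,\frac{d\theta}{2\pi}.$$
The right-hand side is the Poisson integral of $u=|f-h|^2\in L^\infty$. I would invoke the standard fact from \cite{Gar,Hof}: for $u\in L^\infty$ and $m\in\mathcal M(H^\infty+C)$, the Poisson extension satisfies $\lim_{z\to m}u(z)=\int_{S_m}u\,d\mu_m$. (One proves this first for $u\in H^\infty+\overline{H^\infty}$, then uses density in the sup norm via Sarason's theorem together with $\|u(z)\|\le\|u\|_\infty$.) Because $u=0$ on $S_m$, the limit is $0$, which gives (2).

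\textbf{Step 2: $(3)\Rightarrow(1)$.} This is the main obstacle. Fix a net $z_\alpha\to m$ with $\|H_fk_{z_\alpha}\|_2\to0$.

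First I would rewrite the quantity by change of variables. Since $k_z$ is outer with $|k_z|^2$ equal to the Poisson kernel, composition with $\phi_z$ gives
$$\|H_fk_z\|_2=\operatorname{dist}_{L^2}(f\circ\phi_z,\ H^2).$$
Next I would pass to the limit using Hoffman's map $L_m:\D\to\mathcal M(H^\infty)$, which sends $\D$ onto the Gleason part of $m$, with $L_m(0)=m$. Along a subnet, $f\circ\phi_{z_\alpha}$ converges weak-$*$ in $L^\infty$ to a function $F$. The hypothesis then gives $\operatorname{dist}_{L^2}(F,H^2)=0$, so $F\in H^\infty$.

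Finally I would transfer this back to $S_m$. The moments satisfy
$$\int_{S_m} f\,\overline{q}\,d\mu_m=\lim_\alpha\int (f\circ\phi_{z_\alpha})\,\overline{q\circ\phi_{z_\alpha}}\,\frac{d\theta}{2\pi}$$
for $q$ in an appropriate dense class. Using this, I would show that $f$ annihilates the annihilator of $H^\infty|_{S_m}$ in $L^1(\mu_m)$. By Hoffman's description, that annihilator is $\overline{H^1_0(\mu_m)}$. Hahn--Banach, in its weak-$*$ form, then yields $f|_{S_m}\in H^\infty|_{S_m}$.

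\textbf{Where the difficulty lies.} The delicate point is the last part of Step 2: identifying the limit over the net with integration against $\mu_m$ on the support set, which needs nets rather than sequences. If that identification proves awkward, I would instead argue by contradiction. Suppose $f|_{S_m}\notin H^\infty|_{S_m}$. Then there is a $k\in H^1_0(\mu_m)$ with $\int f k\,d\mu_m\neq0$. Approximating $k$ by $H^\infty$ functions vanishing at $m$, Step 1's Poisson-limit fact would produce the lower bound
$$\varliminf_{z\to m}\|H_fk_z\|_2\ge c>0,$$
which contradicts (3).
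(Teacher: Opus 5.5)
The paper does not prove this lemma; it quotes it from Gorkin--Zheng. Your argument therefore has to stand on its own. Step~1 is essentially fine (see the remark at the end), but Step~2 has a genuine gap.

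First, the weak-$*$ limit $F$ of $f\circ\phi_{z_\alpha}$ does no work. The moment $\int (f\circ\phi_{z_\alpha})\,\overline{q\circ\phi_{z_\alpha}}\,\frac{d\theta}{2\pi}$ is the integral of a product of two weak-$*$ convergent nets, so it cannot be evaluated from $F$. Worse, if $m$ lies in a one-point Gleason part, then $\phi_{z_\alpha}(\zeta)\to m$ for every $\zeta\in\mathbb{D}$, because $\rho(\phi_z(\zeta),z)=|\zeta|$. Hence $F\equiv\int_{S_m}f\,d\mu_m$ for \emph{every} $f\in L^\infty$, and ``$F\in H^\infty$'' carries no information.

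What (3) really gives must come from the \emph{norm} smallness of $H_fk_{z_\alpha}$. Write $\widehat u(z)=\int u\,|k_z|^2\,\frac{d\theta}{2\pi}$ and split $fk_z=T_fk_z+H_fk_z$. For $q\in H^\infty$ one has $\langle qT_fk_z,k_z\rangle=q(z)\widehat f(z)$, hence
\[
\bigl|\widehat{fq}(z)-q(z)\widehat f(z)\bigr|\le \|q\|_\infty\,\|H_fk_z\|_2 .
\]
Combined with your Poisson-limit fact, this gives $\int_{S_m}fq\,d\mu_m=0$ for all $q\in H^\infty$ with $m(q)=0$. Your sketch, and your fallback argument, leave this step unstated; once stated it is fine.

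Second, and more seriously, the final step is not justified. Annihilating $H^1_0(\mu_m)$ only places $f|_{S_m}$ in the weak-$*$ closure of $H^\infty|_{S_m}$ in $L^\infty(\mu_m)$, and in particular in $H^2(\mu_m)$. ``Hahn--Banach in its weak-$*$ form'' yields $f|_{S_m}\in H^\infty|_{S_m}$ only if $H^\infty|_{S_m}$ is weak-$*$ closed there, or at least if $C(S_m)\cap H^2(\mu_m)\subseteq H^\infty|_{S_m}$. Your contradiction variant assumes the same thing when it produces $k\in H^1_0(\mu_m)$ with $\int fk\,d\mu_m\neq 0$.

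This closedness is not a formal consequence of duality; it is the whole content of the lemma. Suppose $f|_{S_m}\in H^2(\mu_m)$ and $h_n\in H^\infty$ satisfy $\|f-h_n\|_{L^2(\mu_m)}\to 0$. Your Step~1 estimate then gives $\limsup_{z\to m}\|H_fk_z\|_2\le \|f-h_n\|_{L^2(\mu_m)}\to 0$. So, with the tools you already invoke, (2), (3) and ``$f|_{S_m}\in H^2(\mu_m)$'' are equivalent by soft arguments. The closedness you assume is therefore exactly the hard implication (2)$\Rightarrow$(1). Its global counterpart is Sarason's theorem that $\lim_{|z|\to1}\|H_fk_z\|_2=0$ forces $f\in H^\infty+C$. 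You need an actual argument or reference for it.

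A minor point in Step~1: the fact $\lim_{z\to m}\widehat u(z)=\int_{S_m}u\,d\mu_m$ is true, but not by density, since $H^\infty+\overline{H^\infty}$ is not uniformly dense in $L^\infty$. Instead, write a real $u$ as $\log|G|$ with $G=e^{u+i\tilde u}$ invertible in $H^\infty$. Since $\mu_m$ is the unique representing measure, it is an Arens--Singer measure, so $\log|m(G)|=\int_{S_m}u\,d\mu_m$.
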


\begin{rem}\label{rem10}
The Carleson-Corona theorem (\cite{Gar}) tells us that the conclusions of Lemmas \ref{lem6} and Lemma \ref{lem8} are equivalent to the condition that for each $m\in \mathcal M(H^\infty+C)$,
$$\lim\limits_{z\rightarrow m}\|K-T_{\phi_z}^*KT_{\phi_z}\|=0\ \ \ \ \ \mathrm{and} \ \ \ \ \ \lim_{z\rightarrow m}\|S_{\phi_z}K-KT_{\phi_z}\|=0$$
for $z$ in the unit disk $\mathbb D$.
\end{rem}

\section{The proof of Theorem \ref{MR}}
The following lemma will be needed in the proof of main theorem, which was established in \cite[Lemma 17]{GkZ1}.
\begin{lem}\label{lem12}
Suppose that $\varphi$ and $\psi$ are in $L^\infty$. Let $m\in \mathcal M(H^\infty+C)$. If
$$\lim\limits_{z\rightarrow m}\|H_{\varphi}k_z\|_2=0,$$
then we have
$$\lim\limits_{z\rightarrow m}\|H_{\varphi}T_{\psi}k_z\|_2=0.$$
\end{lem}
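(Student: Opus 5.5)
The plan is to reduce the statement, via the M\"obius automorphism $\phi_z$, to a fixed-vector estimate, and then to control the product with $T_\psi$ by H\"older's inequality together with M.~Riesz's theorem. For $z\in\D$ let $U_z h=(h\circ\phi_z)$, acting on $L^2$. Since $\phi_z$ is an involutive automorphism of $\D$ extending to a homeomorphism of $\partial\D$, a standard computation shows that the weighted composition operator $W_z h=(h\circ\phi_z)\,k_z$ is unitary on $L^2$, maps $H^2$ onto $H^2$ and $(H^2)^\perp$ onto $(H^2)^\perp$, and satisfies $W_z1=k_z$. Consequently $W_z$ commutes with $P$. Since $W_z(\varphi h)=(\varphi\circ\phi_z)W_zh$, we obtain $W_z^*H_\varphi W_z=H_{\varphi\circ\phi_z}$ and $W_z^*T_\psi W_z=T_{\psi\circ\phi_z}$. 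Writing $\varphi_z=\varphi\circ\phi_z$ and $\psi_z=\psi\circ\phi_z$, the hypothesis and the conclusion become
$$\|H_\varphi k_z\|_2=\|(I-P)\varphi_z\|_2,\qquad \|H_\varphi T_\psi k_z\|_2=\|(I-P)(\varphi_z\,P\psi_z)\|_2 .$$
Note also that $\|\varphi_z\|_\infty=\|\varphi\|_\infty$ and $\|\psi_z\|_\infty=\|\psi\|_\infty$.

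Next we split $\varphi_z=a_z+b_z$ with $a_z=P\varphi_z$ and $b_z=(I-P)\varphi_z$. By M.~Riesz's theorem, $P$ is bounded on $L^p$ for $1<p<\infty$. Hence $a_z,b_z,P\psi_z$ all lie in $L^p$ for every finite $p$, with $\|a_z\|_p,\|b_z\|_p\le C_p\|\varphi\|_\infty$ and $\|P\psi_z\|_p\le C_p\|\psi\|_\infty$, where these bounds are uniform in $z$. The product $a_z\,P\psi_z$ lies in $H^4\cdot H^4\subset H^2$, so $(I-P)(a_zP\psi_z)=0$. The identity $\varphi_zP\psi_z=a_zP\psi_z+b_zP\psi_z$ holds in $L^2$, which gives
$$\|H_\varphi T_\psi k_z\|_2=\|(I-P)(b_zP\psi_z)\|_2\le\|b_z\|_4\,\|P\psi_z\|_4\le C_4\|\psi\|_\infty\,\|b_z\|_4 .$$

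It remains to show that $\|b_z\|_4\to0$ as $z\to m$. The log-convexity of $L^p$ norms gives
$$\|b_z\|_4\le\|b_z\|_2^{1/3}\,\|b_z\|_8^{2/3}\le C_8^{2/3}\|\varphi\|_\infty^{2/3}\,\|b_z\|_2^{1/3},$$
and $\|b_z\|_2=\|H_\varphi k_z\|_2\to0$ as $z\to m$ by hypothesis. Therefore $\lim_{z\to m}\|H_\varphi T_\psi k_z\|_2=0$. Because every bound used is uniform in $z$, the limit is valid along arbitrary nets $z\to m$, as the definition requires.

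The main obstacle is the first step: verifying that $W_z$ is unitary, sends $1$ to $k_z$ (up to a unimodular constant, which is harmless), and commutes with $P$. For the commutation, $W_z$ preserves $H^2$ because $k_z$ and $h\circ\phi_z$ are analytic when $h$ is. It preserves $(H^2)^\perp=\overline{zH^2}$ by a conjugation argument: $|k_z|^2=|\phi_z'|$ on the circle, and $\overline{\phi_z}\,\overline{k_z}=-\bar\tau\,k_z\cdot(\text{unimodular constant})$, which together show that $\overline{\tau h(\tau)}$ is mapped to a function of the same form. Once this symmetry is established, the rest is the short $L^p$ argument above.
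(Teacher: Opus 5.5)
Your proof is correct, and there is nothing in the paper to compare it against: the paper does not prove this lemma, it imports it as Lemma 17 of Guo--Zheng (its reference on essentially commuting Hankel and Toeplitz operators). Your argument is self-contained and gives more than the statement asks for. The M\"obius conjugation $W_z$ is a unitary involution, since $(k_z\circ\phi_z)\,k_z=1$; it satisfies $W_z1=k_z$ and $W_zPW_z=P$. Combining it with the M.~Riesz splitting of $\varphi\circ\phi_z$, H\"older's inequality, and the interpolation $\|b\|_4\le\|b\|_2^{1/3}\|b\|_8^{2/3}$ yields the pointwise bound
\[
\|H_\varphi T_\psi k_z\|_2\le C\,\|\varphi\|_\infty^{2/3}\,\|\psi\|_\infty\,\|H_\varphi k_z\|_2^{1/3}\qquad (z\in\D),
\]
with $C$ an absolute constant. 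Consequences of this route:
\begin{itemize}
\item The point $m$ enters only through the numbers $\|H_\varphi k_z\|_2$.
\item No support sets, no Lemma 9, and nothing about the topology of $\mathcal M(H^\infty+C)$ are needed.
\item The conclusion holds for any mode of approach: nets, sequences, or $|z|\to1$.
\end{itemize}
What the citation buys the paper is only brevity.

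There is one slip, in a step you do not actually need. The identity $\overline{\phi_z}\,\overline{k_z}=-\bar\tau\,k_z\cdot(\text{unimodular constant})$ is false for $z\neq0$. The correct relation on $\partial\D$ is $\overline{\phi_z}\,k_z=-\bar\tau\,\overline{k_z}$, which gives $W_z(\overline{\tau h})=-\overline{\tau\,W_zh}$.

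In any case, invariance of $(H^2)^\perp$ is automatic. Since $W_z$ is unitary with $W_z^{-1}=W_z$ and $W_zH^2=H^2$, the adjoint $W_z^*$ also preserves $H^2$, so $W_z$ preserves its orthogonal complement.

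A trivial point: the uniform bound for $\|b_z\|_p$ should use $\|I-P\|_{L^p\to L^p}$ (for example $1+C_p$) rather than $C_p$.
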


We will show the Theorem \ref{MR} with the following two propositions.
\begin{prop}\label{prop14}
Let $f_1, f_2, g_1,g_2\in L^{\infty}$. For each support set $S$, if one of the following holds:\\
$(1)$ $\left(f_1-g_1\right)|_{S}=0$;\\
$(2)$ $f_2|_{S}$ and $\overline{g_2}|_{S}$ are in $H^{\infty}|_{S}$.\\
Then the semi-commutator $S_{f_1,g_1}S_{f_2,g_2}-S_{f_1f_2, g_1g_2}$ is compact.
\end{prop}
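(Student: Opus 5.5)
By Lemma \ref{lem2}, it suffices to show that the four operators
$$H^{*}_{\overline{u}}H_{f_2},\quad T_{u}H^{*}_{\overline{g_2}},\quad S_{u}H_{f_2},\quad H_{u}H^{*}_{\overline{g_2}}$$
are compact, where $u=f_1-g_1$ (signs are irrelevant). The hypothesis says that on every support set $S$ either $u|_S=0$ or both $f_2|_S,\overline{g_2}|_S\in H^\infty|_S$.

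\textbf{The two Hankel products.} These follow from the Axler--Chang--Sarason--Volberg type criterion for products of Hankel operators. Recall that $H^*_{\overline{u}}H_{f_2}$ is compact if and only if, on each support set, $u|_S\in H^\infty|_S$ or $f_2|_S\in H^\infty|_S$. Our hypothesis gives this, since $u|_S=0\in H^\infty|_S$ in the first alternative. I would rather phrase this through the local tools already quoted. By Lemma \ref{lem9}, for each $m\in\mathcal M(H^\infty+C)$ we have $\lim_{z\to m}\|H_{\overline u}k_z\|=0$ or $\lim_{z\to m}\|H_{f_2}k_z\|=0$. Hence $\|(H_{\overline u}k_z)\otimes(H_{f_2}k_z)\|\to 0$ as $z\to m$ for every $m$, and this is exactly the characterization coming from Lemma \ref{lem5} together with the known theorem that $H^*_FH_G$ is compact iff this rank-one quantity tends to $0$ on $\mathcal M(H^\infty+C)$. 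I would cite that result (Zheng's elementary characterization \cite{Zheng}) directly.

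For $H_uH^*_{\overline{g_2}}$, I use $V$ and Remark \ref{rem4}:
$$H_uH^*_{\overline{g_2}}=V H_u^* V\,V H_{\overline{g_2}}V = VH_u^*H_{\overline{g_2}}V.$$
Since $V$ is anti-unitary, compactness reduces to that of $H^*_uH_{\overline{g_2}}$. On each $S$ we have $u|_S=0$ or $\overline{g_2}|_S\in H^\infty|_S$, so the same Hankel-product criterion applies.

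\textbf{The mixed products.} Conjugating by $V$ again, $S_uH_{f_2}=S_uVH^*_{f_2}V=VT_{\overline u}H^*_{f_2}V$. Taking adjoints, its compactness is equivalent to compactness of $H_{f_2}T_u$. Similarly $T_uH^*_{\overline{g_2}}$ is compact iff $H_{\overline{g_2}}T_{\overline u}$ is. So the core claim is:
\begin{quote}
if on every support set either $u|_S=0$ or $h|_S\in H^\infty|_S$, then $H_hT_u$ is compact.
\end{quote}

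This is the main obstacle, and I would attack it through Douglas algebras and support sets rather than the Berezin-type estimates. Write $H_hT_u=H_{hu}-S_hH_u$. Here $H_{hu}$ is compact iff $hu\in H^\infty+C$, which holds because on each support set $hu|_S$ is either $0$ or (when $u\ne 0$ there) we must additionally know $u$'s behavior. That gap shows the decomposition alone is too naive.

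Instead I would use the local criterion. By a Gorkin--Zheng type result (\cite{GkZ1}), $H_hT_u$ is compact iff for each $m$, $\lim_{z\to m}\|H_hT_uk_z\|$ and the related dual quantity vanish. If $h|_{S_m}\in H^\infty|_{S_m}$, then Lemma \ref{lem9} gives $\|H_hk_z\|\to0$, and Lemma \ref{lem12} gives $\|H_hT_uk_z\|\to0$. If $u|_{S_m}=0$, then $\|H_uk_z\|,\|H_{\overline u}k_z\|\to0$, and in fact $\|uk_z\|\to 0$ since $|k_z|^2$ approximates the representing measure of $m$. Hence $\|T_uk_z\|\to0$ and $\|H_hT_uk_z\|\to0$.

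To turn this local vanishing into compactness, I would use a Lemma \ref{lem7}/\ref{lem8} argument: the operator $K=H_hT_u$ satisfies
$$S_{\phi_z}K-KT_{\phi_z}=-(H_hk_z)\otimes(VH_uk_z).$$
In either case this quantity tends to $0$ as $z\to m$, for every $m$. Together with the vanishing of $\|Kk_z\|$, I would invoke the localization/Berezin-type compactness criterion for operators in the Toeplitz-Hankel algebra to conclude that $K$ is compact. Verifying that this criterion applies is the step I expect to need the most care; failing that, I would fall back on the Douglas-algebra description of compact $H_hT_u$ from \cite{GkZ1}.
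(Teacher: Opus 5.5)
\textbf{There is a genuine gap: compactness of the two mixed products is not proved.}

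Your reductions are sound and match the paper:
Lemma \ref{lem2}, the identities $H_uH^*_{\overline{g_2}}=VH_u^*H_{\overline{g_2}}V$ and $S_uH_{f_2}=VT_{\overline u}H^*_{f_2}V$, and the passage to adjoints. For the two Hankel products, quoting Zheng's criterion (or the Axler--Chang--Sarason--Volberg theorem) directly is a legitimate shortcut. It replaces the paper's route through \cite[Theorem 12]{GkZ2} and \cite[Corollary 6]{BH}. One small slip: the ACSV condition for $H^*_{\overline u}H_{f_2}$ is $\overline u|_S\in H^\infty|_S$ or $f_2|_S\in H^\infty|_S$, not $u|_S$. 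This is harmless here, since $u|_S=0$.

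The gap is in the mixed products, which carry all the real work. For $K=H_hT_u$ you prove two things:
\begin{itemize}
\item $\|Kk_z\|_2\to0$;
\item $\|L_z\|\to0$, where $L_z:=S_{\phi_z}K-KT_{\phi_z}$ is rank one with $\|L_z\|=\|H_hk_z\|_2\|H_uk_z\|_2$.
\end{itemize}
You then defer to an unnamed ``localization/Berezin-type compactness criterion''. Earlier you attribute to \cite{GkZ1} a criterion that also needs a ``related dual quantity'' to vanish, but you never identify or estimate that quantity, and your fallback to \cite{GkZ1} is equally unspecified. Nothing in the paper gives a sufficient criterion of this kind for operators $H^2\to(H^2)^\perp$; Lemma \ref{lem8} only goes in the necessary direction. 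The available tool, \cite[Theorem 12]{GkZ2}, concerns finite sums of finite products of Toeplitz operators on $H^2$. So you must pass to a positive square, verify the asymptotic-Toeplitz condition for it, and then remove the Toeplitz part with the symbol map. Which square you take decides which local quantity has to vanish.

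\textbf{Using $K^*K$ (the paper's route).} The paper works with $AA^*$ for $A=T_{f_1-g_1}H^*_{\overline{g_2}}$, i.e.\ with $K^*K$ for $K=A^*$. By Lemma \ref{lem7} and $S^*_{\phi_z}S_{\phi_z}=I-Vk_z\otimes Vk_z$, the defect $K^*K-T^*_{\phi_z}K^*KT_{\phi_z}$ equals $(K^*Vk_z)\otimes(K^*Vk_z)$ plus terms of norm $O(\|L_z\|)$. Here $\|K^*Vk_z\|_2=\|S_uH_hk_z\|_2$, which is not $\|Kk_z\|_2$. The paper controls it through $S_uH_hk_z=H_{uh}k_z-H_uT_hk_z$ and Lemmas \ref{lem9} and \ref{lem12}.

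\textbf{Using $KK^*$ (your ingredients suffice here).} From $S_{\phi_z}K=KT_{\phi_z}+L_z$ and $T_{\phi_z}T^*_{\phi_z}=I-k_z\otimes k_z$ one gets
\[
\|KK^*-S_{\phi_z}KK^*S^*_{\phi_z}\|\le\|Kk_z\|_2^2+2\|K\|\,\|L_z\|+\|L_z\|^2 .
\]
Conjugating by $V$, and using $VS_{\phi_z}V=T^*_{\phi_z}$, turns this into the asymptotic-Toeplitz condition for $VKK^*V=(S_uH_h)^*(S_uH_h)$. That operator is a finite sum of finite products of Toeplitz operators and lies in the kernel of the symbol map. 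So \cite[Theorem 12]{GkZ2}, together with the symbol map, gives compactness of $KK^*$, hence of $K$.

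Until one of these two computations is written out, compactness of $T_{f_1-g_1}H^*_{\overline{g_2}}$ and $S_{g_1-f_1}H_{f_2}$ is not established.

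Incidentally, in the case $u|_{S_m}=0$ you do not need $\|uk_z\|_2\to0$. The decomposition you discarded gives $H_hT_uk_z=H_{hu}k_z-S_hH_uk_z$ with $(hu)|_{S_m}=0$, and both terms tend to $0$ by Lemma \ref{lem9}.
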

\begin{proof}
For each support set $S_m$ of $m\in \mathcal{M}(H^\infty+C)$, we suppose that $\left(f_1-g_1\right)|_{S_m}=0$ or $f_{2}|_{S_m}$ and $\overline{g_{2}}|_{S_m}$ are in $H^{\infty}|_{S_m}$. Then we have by Lemma \ref{lem9} that
\begin{equation}\label{eq6}
\begin{aligned}
\lim\limits_{z\rightarrow m}\|H_{\left(f_1-g_1\right)}k_z\|_2&=\lim\limits_{z\rightarrow m}\|H_{\overline{\left(f_1-g_1\right)}}k_z\|_2=0,\\
\lim\limits_{z\rightarrow m}\|H_{\left(f_1-g_1\right)f_2}k_z\|_2&=\lim\limits_{z\rightarrow m}\|H_{\overline{\left(f_1-g_1\right)}f_2}k_z\|_2=0,\\
\lim\limits_{z\rightarrow m}\|H_{\left(f_1-g_1\right)\overline{g_2}}k_z\|_2&=\lim\limits_{z\rightarrow m}\|H_{\overline{\left(f_1-g_1\right)g_2}}k_z\|_2=0
\end{aligned}
\end{equation}
or
\begin{equation}\label{eq7}
\lim\limits_{z\rightarrow m}\|H_{f_2}k_z\|_2=\lim\limits_{z\rightarrow m}\|H_{\overline{g_{2}}}k_z\|_2=0.
\end{equation}
According to Lemma \ref{lem2}, we are going to prove $H^{*}_{\overline{\left(g_1-f_1\right)}}H_{f_2}$, $T_{\left(f_1-g_1\right)}H^{*}_{\overline{g_2}}$, $S_{\left(g_1-f_1\right)}H_{f_2}$ and $H_{\left(f_1-g_1\right)}H^{*}_{\overline{g_2}}$ are compact.\\
By Lemma \ref{lem5}, we have
\begin{equation}\label{eq3}
\begin{aligned}
&\~~~\~~~\lim\limits_{z\rightarrow m}\left\|H^{*}_{\overline{g_1-f_1}}H_{f_2}-T_{\phi_z}^*H^{*}_{\overline{g_1-f_1}}H_{f_2}T_{\phi_z}\right\|\\
&=\lim\limits_{z\rightarrow m}\left\| V\left\{\left[H_{\overline{\left(g_1-f_1\right)}}k_z\right])\otimes\left(H_{f_2}k_z\right)\right\}V^*\right\|\\
&=\lim\limits_{z\rightarrow m}\left\|\left[H_{\overline{\left(g_1-f_1\right)}}k_z\right]\otimes(H_{f_2}k_z)\right\|\\
&=\lim\limits_{z\rightarrow m}\left\|H_{\overline{\left(g_1-f_1\right)}}k_z\right\|_2\cdot\left\|H_{f_2}k_z\right\|_2\\
&=0.
\end{aligned}
\end{equation}
On the other hand, noting
$$H^{*}_{\overline{g_1-f_1}}H_{f_2}=T_{\left(\overline{g_1-f_1}\right)f_{2}}-T_{\overline{g_1-f_1}}T_{f_{2}},$$
which is a finite sum of finite products of Toeplitz operators. According to \cite[Theorem 12]{GkZ2}, we obtain by (\ref{eq3}) that $H^{*}_{\overline{\left(g_1-f_1\right)}}H_{f_2}$ is equal to a compact perturbation of a Toeplitz operator, i.e.,
$$H^{*}_{\overline{\left(g_1-f_1\right)}}H_{f_2}=T_{\left(g_1-f_1\right)f_{2}}-T_{\left(g_1-f_1\right)}T_{f_{2}}
=T_h+K$$
for some $h\in L^\infty$ and some compact operator $K$.  Thus 
$$K=T_{\left(g_1-f_1\right)f_{2}}-T_{\left(g_1-f_1\right)}T_{f_{2}}-T_h$$ 
belongs to the Toeplitz algebra $\mathscr{T}_{L^{\infty}}$. We conclude by \cite[Corollary 6]{BH} that $h=0$ a.e., which implies that $H^{*}_{\overline{\left(g_1-f_1\right)}}H_{f_2}=K$ is compact.

To show the operator $H_{\left(f_1-g_1\right)}H^{*}_{\overline{g_2}}$ is compact, we recall that 
$$VH_{\varphi}=H_{\varphi}^*V.$$
Then
\begin{equation}\label{eq4}
\begin{array}{l}
V\left[H_{\left(f_1-g_1\right)}H^{*}_{\overline{g_2}}\right]V =H_{\left(f_1-g_1\right)}^{*}VVH_{\overline{g_2}}
=H_{\left(f_1-g_1\right)}^{*}H_{\overline{g_2}},
\end{array}
\end{equation}
where the second equality follows from $V^2=I$. Using the same method as the above, we can show  similarly that  $H_{\left(f_1-g_1\right)}^{*}H_{\overline{g_2}}$ is compact. Furthermore, (\ref{eq4}) gives us that
$H_{\left(f_1-g_1\right)}H^{*}_{\overline{g_2}}$ is also compact.

Now we turn to the proof of the compactness of the operator $T_{\left(f_1-g_1\right)}H^{*}_{\overline{g_2}}$. We need only to consider the compactness of the following operator
\begin{align*}
\left[T_{\left(f_1-g_1\right)}H^{*}_{\overline{g_2}}\right]\left[T_{\left(f_1-g_1\right)}H^{*}_{\overline{g_2}}\right]^{*}&=T_{\left(f_1-g_1\right)}H^{*}_{\overline{g_2}}H_{\overline{g_2}}T_{\overline{\left(f_1-g_1\right)}}\\
&=T_{\left(f_1-g_1\right)}\left(T_{g_2\overline{g_2}}-T_{g_2}T_{\overline{g_2}}\right)T_{\overline{\left(f_1-g_1\right)}},
\end{align*}
which is in the Toeplitz algebra $\mathscr{T}_{L^{\infty}}$ and the symbol map maps it to 0. Using \cite[Theorem 12]{GkZ2} again, we need only to prove that  
$$\lim\limits_{z\rightarrow m} \left\|\left[T_{\left(f_1-g_1\right)}H^{*}_{\overline{g_2}}\right]\left[T_{\left(f_1-g_1\right)}H^{*}_{\overline{g_2}}\right]^{*}-T_{\phi_z}^*\left[T_{\left(f_1-g_1\right)}H^{*}_{\overline{g_2}}\right]\left[T_{\left(f_1-g_1\right)}H^{*}_{\overline{g_2}}\right]^{*}T_{\phi_z}\right\|=0.$$
Since
\begin{align*}\label{eq5}
&\~~\~~\~~\ T_{\phi_z}^*\left[T_{\left(f_1-g_1\right)}H^{*}_{\overline{g_2}}\right]\left[T_{\left(f_1-g_1\right)}H^{*}_{\overline{g_2}}\right]^{*}T_{\phi_z}\\
&=\left[H_{\overline{g_2}}T_{\overline{\left(f_1-g_1\right)}}T_{\phi_{z}}\right]^{*}\left[H_{\overline{g_2}}T_{\overline{\left(f_1-g_1\right)}}T_{\phi_{z}}\right]\\
&=\left\{S_{\phi_{z}}H_{\overline{g_2}}T_{\overline{\left(f_1-g_1\right)}}-\left(H_{\overline{g_2}}k_{z}\right)\otimes\left[VH_{\overline{\left(f_1-g_1\right)}}k_{z}\right]\right\}^{*}\\
&\~~\~~\~~\ \cdot\left\{S_{\phi_{z}}H_{\overline{g_2}}T_{\overline{\left(f_1-g_1\right)}}-\left(H_{\overline{g_2}}k_{z}\right)\otimes\left[VH_{\overline{\left(f_1-g_1\right)}}k_{z}\right]\right\} \~~~~\~~~(\text{by\ Lemma\ \ref{lem7}})\\
&=T_{\left(f_1-g_1\right)}H^{*}_{\overline{g_2}}S_{\phi_{z}}^{*}S_{\phi_{z}}H_{\overline{g_2}}T_{\overline{\left(f_1-g_1\right)}}
-\left[T_{\left(f_1-g_1\right)}H^{*}_{\overline{g_2}}S_{\phi_{z}}^{*}H_{\overline{g_2}}k_{z}\right]\otimes\left[VH_{\overline{\left(f_1-g_1\right)}}k_{z}\right]\\
&\~~\~~\~~\ -\left[VH_{\overline{\left(f_1-g_1\right)}}k_{z}\right]\otimes\left[T_{\left(f_1-g_1\right)}H^{*}_{\overline{g_2}}S_{\phi_{z}}^{*}H_{\overline{g_2}}k_{z}\right]\\
&\~~\~~\~~\ +\left\{\left(H_{\overline{g_2}}k_{z}\right)\otimes\left[VH_{\overline{\left(f_1-g_1\right)}}k_{z}\right]\right\}^{*}\left\{\left(H_{\overline{g_2}}k_{z}\right)\otimes\left[VH_{\overline{\left(f_1-g_1\right)}}k_{z}\right]\right\}\\
&=T_{\left(f_1-g_1\right)}H^{*}_{\overline{g_2}}\left(I-Vk_z\otimes Vk_z\right)H_{\overline{g_2}}T_{\overline{\left(f_1-g_1\right)}}\\
&\~~\~~\~~\ -\left[T_{\left(f_1-g_1\right)}H^{*}_{\overline{g_2}}S_{\phi_{z}}^{*}H_{\overline{g_2}}k_{z}\right]\otimes\left[VH_{\overline{\left(f_1-g_1\right)}}k_{z}\right]\\
&\~~\~~\~~\ -\left[VH_{\overline{\left(f_1-g_1\right)}}k_{z}\right]\otimes\left[T_{\left(f_1-g_1\right)}H^{*}_{\overline{g_2}}S_{\phi_{z}}^{*}H_{\overline{g_2}}k_{z}\right]\\
&\~~\~~\~~\ +\left\{\left(H_{\overline{g_2}}k_{z}\right)\otimes\left[VH_{\overline{\left(f_1-g_1\right)}}k_{z}\right]\right\}^{*}\left\{\left(H_{\overline{g_2}}k_{z}\right)\otimes\left[VH_{\overline{\left(f_1-g_1\right)}}k_{z}\right]\right\}\\
&=\left[T_{\left(f_1-g_1\right)}H^{*}_{\overline{g_2}}\right]\left[T_{\left(f_1-g_1\right)}H^{*}_{\overline{g_2}}\right]^{*}
-\left[T_{\left(f_1-g_1\right)}H^{*}_{\overline{g_2}}Vk_z\right]\otimes \left[T_{\left(f_1-g_1\right)}H^{*}_{\overline{g_2}}Vk_z\right]\\
&\~~\~~\~~\ -\left[T_{\left(f_1-g_1\right)}H^{*}_{\overline{g_2}}S_{\phi_{z}}^{*}H_{\overline{g_2}}k_{z}\right]\otimes\left[VH_{\overline{\left(f_1-g_1\right)}}k_{z}\right]\\
&\~~\~~\~~\ -\left[VH_{\overline{\left(f_1-g_1\right)}}k_{z}\right]\otimes\left[T_{\left(f_1-g_1\right)}H^{*}_{\overline{g_2}}S_{\phi_{z}}^{*}H_{\overline{g_2}}k_{z}\right]\\
&\~~\~~\~~\ +\left\{\left(H_{\overline{g_2}}k_{z}\right)\otimes\left[VH_{\overline{\left(f_1-g_1\right)}}k_{z}\right]\right\}^{*}\left\{\left(H_{\overline{g_2}}k_{z}\right)\otimes\left[VH_{\overline{\left(f_1-g_1\right)}}k_{z}\right]\right\}\\
&=\left[T_{\left(f_1-g_1\right)}H^{*}_{\overline{g_2}}\right]\left[T_{\left(f_1-g_1\right)}H^{*}_{\overline{g_2}}\right]^{*}
-\left[VS_{\overline{\left(f_1-g_1\right)}}H_{\overline{g_2}}k_z\right]\otimes \left[VS_{\overline{\left(f_1-g_1\right)}}H_{\overline{g_2}}k_z\right]\\
&\~~\~~\~~\ -\left[T_{\left(f_1-g_1\right)}H^{*}_{\overline{g_2}}S_{\phi_{z}}^{*}H_{\overline{g_2}}k_{z}\right]\otimes\left[VH_{\overline{\left(f_1-g_1\right)}}k_{z}\right]\\
&\~~\~~\~~\ -\left[VH_{\overline{\left(f_1-g_1\right)}}k_{z}\right]\otimes\left[T_{\left(f_1-g_1\right)}H^{*}_{\overline{g_2}}S_{\phi_{z}}^{*}H_{\overline{g_2}}k_{z}\right]\\
&\~~\~~\~~\ +\left\{\left(H_{\overline{g_2}}k_{z}\right)\otimes\left[VH_{\overline{\left(f_1-g_1\right)}}k_{z}\right]\right\}^{*}\left\{\left(H_{\overline{g_2}}k_{z}\right)\otimes\left[VH_{\overline{\left(f_1-g_1\right)}}k_{z}\right]\right\},
\end{align*}
we have 
\begin{equation}\label{eq8}
\begin{aligned}
&\~~~\~~~ \left[T_{\left(f_1-g_1\right)}H^{*}_{\overline{g_2}}\right]\left[T_{\left(f_1-g_1\right)}H^{*}_{\overline{g_2}}\right]^{*}-T_{\phi_z}^*\left[T_{\left(f_1-g_1\right)}H^{*}_{\overline{g_2}}\right]\left[T_{\left(f_1-g_1\right)}H^{*}_{\overline{g_2}}\right]^{*}T_{\phi_z}\\
&=\left[VS_{\overline{\left(f_1-g_1\right)}}H_{\overline{g_2}}k_z\right]\otimes \left[VS_{\overline{\left(f_1-g_1\right)}}H_{\overline{g_2}}k_z\right]\\
&\~~\~~\~~\ +\left[T_{\left(f_1-g_1\right)}H^{*}_{\overline{g_2}}S_{\phi_{z}}^{*}H_{\overline{g_2}}k_{z}\right]\otimes\left[VH_{\overline{\left(f_1-g_1\right)}}k_{z}\right]\\
&\~~\~~\~~\ +\left[VH_{\overline{\left(f_1-g_1\right)}}k_{z}\right]\otimes\left[T_{\left(f_1-g_1\right)}H^{*}_{\overline{g_2}}S_{\phi_{z}}^{*}H_{\overline{g_2}}k_{z}\right]\\
&\~~\~~\~~\ -\left\{\left(H_{\overline{g_2}}k_{z}\right)\otimes\left[VH_{\overline{\left(f_1-g_1\right)}}k_{z}\right]\right\}^{*}\left\{\left(H_{\overline{g_2}}k_{z}\right)\otimes\left[VH_{\overline{\left(f_1-g_1\right)}}k_{z}\right]\right\}.
\end{aligned}
\end{equation}
Thus we only need to show 
$$\lim\limits_{z\rightarrow m}\left\|\left[VS_{\overline{\left(f_1-g_1\right)}}H_{\overline{g_2}}k_z\right]\otimes \left[VS_{\overline{\left(f_1-g_1\right)}}H_{\overline{g_2}}k_z\right]\right\|=\lim\limits_{z\rightarrow m}\left\|S_{\overline{\left(f_1-g_1\right)}}H_{\overline{g_2}}k_z\right\|_2^{2}=0.$$
If (\ref{eq7}) holds, we naturally have $\lim\limits_{z\rightarrow m}\left\|S_{\overline{f_1-g_1}}H_{\overline{g_2}}k_z\right\|_2=0$ since $f_{1}, g_{1}\in L^{\infty}$. Else if (\ref{eq6}) holds, then
\begin{align*}\label{eq9}
S_{\overline{f_1-g_1}}H_{\overline{g_2}}k_z
        &= (I-P)\left[\overline{\left(f_1-g_1\right)}(I-P)(\overline{g_2}k_z)\right]\\
        &= (I-P)\left[\overline{\left(f_1-g_1\right)}\cdot\left(\overline{g_2}k_z\right)-\overline{\left(f_1-g_1\right)}P(\overline{g_2}k_z)\right]\\
        &= H_{\overline{\left(f_1-g_1\right)g_2}}k_z-H_{\overline{\left(f_1-g_1\right)}}T_{\overline{g_2}}k_z,
\end{align*}
which implies $\lim\limits_{z\rightarrow m}\left\|S_{\overline{f_1-g_1}}H_{\overline{g_2}}k_z\right\|_2=0$ 
by Lemma \ref{lem12}. It gives that $\left[T_{\left(f_1-g_1\right)}H^{*}_{\overline{g_2}}\right]\left[T_{\left(f_1-g_1\right)}H^{*}_{\overline{g_2}}\right]^{*}$ is compact, so $T_{\left(f_1-g_1\right)}H^{*}_{\overline{g_2}}$ is also compact.

Using anti-unitary operator $V$ again, we have
 $$VS_{\left(g_1-f_1\right)}H_{f_2}V=T_{\overline{g_1-f_1}}V^2H_{f_2}^{*}=T_{\overline{g_1-f_1}}H_{f_2}^{*},$$
 which has the same form as the compact operator $T_{\left(f_1-g_1\right)}H^{*}_{\overline{g_2}}$. Thus we can conclude that $T_{\overline{g_1-f_1}}H_{f_2}^{*}$ is compact by the same argument as above. It follows that 
$S_{\left(g_1-f_1\right)}H_{f_2}$ is compact, to complete this proof. 
\end{proof}

\begin{prop}\label{prop11}
Let $f_1, f_2, g_1,g_2\in L^{\infty}$. Then the semi-commutator $S_{f_1,g_1}S_{f_2,g_2}-S_{f_1f_2, g_1g_2}$ is compact, then for each support set $S$, one of the following holds:\\
$(1)$ $\left(f_1-g_1\right)|_{S}=0$;\\
$(2)$ $f_2|_{S}$ and $\overline{g_2}|_{S}$ are in $H^{\infty}|_{S}$.
\end{prop}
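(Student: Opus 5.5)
Write $u=f_1-g_1$. By Lemma \ref{lem2}, compactness of the semi-commutator gives compactness of the four operators $H^{*}_{\overline{u}}H_{f_2}$, $T_{u}H^{*}_{\overline{g_2}}$, $S_{u}H_{f_2}$ and $H_{u}H^{*}_{\overline{g_2}}$. Fix $m\in\mathcal M(H^\infty+C)$ with support set $S_m$. Suppose $u|_{S_m}\neq0$. The goal is then to show $\lim_{z\to m}\|H_{f_2}k_z\|_2=0$ and $\lim_{z\to m}\|H_{\overline{g_2}}k_z\|_2=0$. Lemma \ref{lem9} will then give condition (2).

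\textbf{Step 1: localize the compact operators.} Apply Lemma \ref{lem8} to $K=S_uH_{f_2}$ and use Remark \ref{rem10}. This gives $\|S_{\phi_z}S_uH_{f_2}-S_uH_{f_2}T_{\phi_z}\|\to0$ as $z\to m$. Since $S_{\phi_z}$ and $S_u$ commute modulo $H_{\phi_z}$-type terms, I would compute the difference directly: $S_uH_{f_2}T_{\phi_z}-S_{\phi_z}S_uH_{f_2}$. Using Lemma \ref{lem7} with $g=1$ replaced appropriately, $H_{f_2}T_{\phi_z}-S_{\phi_z}H_{f_2}$ is the rank-one operator $(H_{f_2}k_z)\otimes(\text{unit vector})$, up to sign. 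Also $S_uS_{\phi_z}-S_{\phi_z}S_u=H_{\phi_z}H^*_{\overline u}-H_uH^*_{\overline{\phi_z}}$, and $H_{\phi_z}$ is rank one with range spanned by $Vk_z$. Hence the limit reduces to
\[
\bigl\|(S_uH_{f_2}k_z)\otimes(Vk_z)\bigr\| \;+\; (\text{rank-one terms involving } H_{\overline{u}}^*(\cdot)\text{ applied to } H_{f_2}) \longrightarrow 0 .
\]
Combining with the analogous localization of $H^*_{\overline u}H_{f_2}$ via Lemmas \ref{lem5} and \ref{lem6}, i.e. $\|H_{\overline u}k_z\|_2\|H_{f_2}k_z\|_2\to0$, the aim is to extract $\|uH_{f_2}k_z\|_2\to 0$. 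The idea is that $P(uH_{f_2}k_z)$ and $(I-P)(uH_{f_2}k_z)$ are controlled respectively by the compact operators $H^*_{\overline u}H_{f_2}$ and $S_uH_{f_2}$ after localization.

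The same argument via $V$ (Remark \ref{rem4}) applies to $T_uH^*_{\overline{g_2}}$ and $H_uH^*_{\overline{g_2}}$. Since $V$ conjugates these to $S_{\overline u}H_{\overline{g_2}}$ and $H^*_{u}H_{\overline{g_2}}$, this gives $\|\overline u\,H_{\overline{g_2}}k_z\|_2\to0$.

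\textbf{Step 2: pass to the support set.} Write $H_{f_2}k_z=(f_2-T_{f_2}\text{-part})k_z$. As $z\to m$, the measures $|k_z|^2d\sigma$ converge weak-* to $\mu_m$. I expect the following dichotomy, in the style of the Gorkin–Zheng / Axler–Chang–Sarason argument. Either $\liminf_{z\to m}\|H_{f_2}k_z\|_2=0$, which by Lemma \ref{lem9}(3) already gives $f_2|_{S_m}\in H^\infty|_{S_m}$, or $\|uH_{f_2}k_z\|_2\to0$ forces $u=0$ on a set carrying $\mu_m$. The second alternative contradicts the assumption $u|_{S_m}\ne0$.

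The main obstacle is exactly this step: deducing from $\|u\,H_{f_2}k_z\|_2\to0$ that $u|_{S_m}=0$ or $f_2|_{S_m}\in H^\infty|_{S_m}$. My first attempt would use that $S_m$ is a weak peak set, so $H^\infty|_{S_m}$ is a closed algebra with an antisymmetry property. I would show that $u\,(f_2-h)=0$ $\mu_m$-a.e. for the function $h$ obtained as a weak-* limit of $T_{f_2}$-type analytic parts. Then I would invoke the fact, standard for support sets, that a nonzero function in $H^2(\mu_m)$, or its conjugate, cannot vanish on a set of positive $\mu_m$-measure. To reduce to an analytic situation I would replace $u$ by a suitable product with inner factors, using a Chang–Marshall type approximation. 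If this fails, the fallback is to cite the Gorkin–Zheng characterization of compact $T_uH^*_{\overline{g_2}}$ (via \cite{GkZ1}) directly on $S_m$.
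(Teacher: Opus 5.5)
Your reduction via Lemma \ref{lem2} is fine, and the useful part of Step 1 holds in a simpler form than you give it. For $h\in(H^2)^{\perp}$ one has $P(uh)=H^*_{\overline{u}}h$ and $(I-P)(uh)=S_uh$. So the operator $h\mapsto uH_{f_2}h$, which equals $H^*_{\overline{u}}H_{f_2}+S_uH_{f_2}$, is compact from $H^2$ into $L^2$, and $\|uH_{f_2}k_z\|_2\to0$ simply because $k_z\to0$ weakly; similarly $\|\overline{u}H_{\overline{g_2}}k_z\|_2\to0$. The localization you sketch is miscomputed, however. In fact $H_{f_2}T_{\phi_z}=S_{\phi_z}H_{f_2}$ exactly (as in (\ref{eq22})--(\ref{eq23})), and $H_{\phi_z}=0$; it is $H_{\overline{\phi_z}}$ that has rank one. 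Lemma \ref{lem8} applied to $S_uH_{f_2}$ yields $\|H_uk_z\|_2\,\|H_{f_2}k_z\|_2\to0$, not a term of the form $(S_uH_{f_2}k_z)\otimes Vk_z$.

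The genuine gap is Step 2. You never prove that $\|uH_{f_2}k_z\|_2\to0$ as $z\to m$ forces $u|_{S_m}=0$ or $f_2|_{S_m}\in H^\infty|_{S_m}$, and the route you outline is not available.
\begin{itemize}
\item Nothing in your sketch lets you pass $(I-P)$, or the analytic parts $P(f_2k_z)$, to a limit on $S_m$.
\item Already for $f_2=\overline{q}$ with $q\in H^\infty$ you have $\|uH_{\overline q}k_z\|_2^2=\int|u|^2|q-q(z)|^2|k_z|^2\,d\sigma$. So your claim is essentially a uniqueness theorem on $S_m$ (an element of $H^\infty|_{S_m}$ vanishing on a set of positive $\mu_m$-measure vanishes identically). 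Neither your argument nor the lemmas at hand establish this, and the Chang--Marshall step is unspecified.
\item The fallback does not suffice either, because compactness of $T_uH^*_{\overline{g_2}}$ alone does not give the dichotomy. Take an infinite Blaschke product $b$, $u=\overline{b}$, $g_2=b$. Then $T_{\overline b}H^*_{\overline b}=(H_{\overline b}T_b)^*=0$, while $u|_S\neq0$ and $\overline b|_S\notin H^\infty|_S$ on any support set where $b$ is nonconstant.
\end{itemize}

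The missing idea is the paper's order of attack. The four compact operators give, via Lemmas \ref{lem5}--\ref{lem8} (using $V$ and adjoints),
\[
\|H_{\overline u}k_z\|_2\|H_{f_2}k_z\|_2,\quad \|H_{u}k_z\|_2\|H_{f_2}k_z\|_2,\quad \|H_{u}k_z\|_2\|H_{\overline{g_2}}k_z\|_2,\quad \|H_{\overline u}k_z\|_2\|H_{\overline{g_2}}k_z\|_2\;\to\;0 .
\]
Combined with Lemma \ref{lem9}, this shows that either $f_2|_{S_m},\overline{g_2}|_{S_m}\in H^\infty|_{S_m}$, or both $u|_{S_m}$ and $\overline u|_{S_m}$ lie in $H^\infty|_{S_m}$, i.e.\ $u|_{S_m}=c$ is constant. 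Only in the constant case is a vector estimate needed, and then it is easy. Write $S_uH_{f_2}k_z=H_{uf_2}k_z-H_uT_{f_2}k_z$; the left side tends to $0$ by compactness, and $H_uT_{f_2}k_z\to0$ by Lemma \ref{lem12}, so $cf_2|_{S_m}\in H^\infty|_{S_m}$. Symmetrically, $S_{\overline u}H_{\overline{g_2}}k_z=H_{\overline{ug_2}}k_z-H_{\overline u}T_{\overline{g_2}}k_z$ gives $\overline c\,\overline{g_2}|_{S_m}\in H^\infty|_{S_m}$. Hence $c=0$ or condition (2) holds, and no uniqueness theorem on $S_m$ is required.
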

\begin{proof}
We suppose that $S_{f_1,g_1}S_{f_2,g_2}-S_{f_1f_2, g_1g_2}$ is compact, where $f_1, f_2, g_1,g_2$ are in $L^{\infty}$. Then we have by Lemma \ref{lem2} that 
$$H^{*}_{\overline{\left(g_1-f_1\right)}}H_{f_2}, T_{\left(f_1-g_1\right)}H^{*}_{\overline{g_2}}, S_{\left(g_1-f_1\right)}H_{f_2}\  \mathrm{and}\ H_{\left(f_1-g_1\right)}H^{*}_{\overline{g_2}}$$
are compact.

Considering the compact operator $H^{*}_{\overline{g_1-f_1}}H_{f_2}$, we have by Lemma \ref{lem5} and Lemma \ref{lem6} that
$$H^{*}_{\overline{\left(g_1-f_1\right)}}H_{f_2}-T_{\phi_z}^*H^{*}_{\overline{\left(g_1-f_1\right)}}H_{f_2}T_{\phi_z}=V\left\{\left[H_{\overline{\left(g_1-f_1\right)}}k_z\right]\otimes(H_{f_2}k_z)\right\}V^*$$
and for each $m\in \mathcal M(H^\infty+C)$,
\begin{equation}\label{eq1}
\begin{aligned}
&\~~~\~~~\lim\limits_{z\rightarrow m}\left\| V\left\{\left[H_{\overline{\left(g_1-f_1\right)}}k_z\right]\otimes\left(H_{f_2}k_z\right)\right\}V^*\right\|\\
&=\lim\limits_{z\rightarrow m}\left\|\left[H_{\overline{\left(g_1-f_1\right)}}k_z\right]\otimes\left(H_{f_2}k_z\right)\right\|\\
&=\lim\limits_{z\rightarrow m}\left\|H_{\overline{\left(g_1-f_1\right)}}k_z\right\|_2\cdot\left\|H_{f_2}k_z\right\|_2\\
&=0,
\end{aligned}
\end{equation}
which gives that $\lim\limits_{z\rightarrow m}\left\|H_{\overline{\left(g_1-f_1\right)}}k_z\right\|_2=0$ or $\lim\limits_{z\rightarrow m}\left\|H_{f_2}k_z\right\|_2=0$. By Lemma \ref{lem9}, we conclude that $\overline{\left(g_1-f_1\right)}|_{S_m}\in H^{\infty}|_{S_m}$ or $f_{2}|_{S_m}\in H^{\infty}|_{S_m}$. \\
Since 
$$H_{\left(f_1-g_1\right)}H^{*}_{\overline{g_2}}=H_{\left(f_1-g_1\right)}V^{2}H^{*}_{\overline{g_2}}=VH^{*}_{\left(f_1-g_1\right)}H_{\overline{g_2}}V,$$
we also have  $\left(f_1-g_1\right)|_{S_m}\in H^{\infty}|_{S_m}$ or $\overline{g_{2}}|_{S_m}\in H^{\infty}|_{S_m}$
by the same argument.

Next we consider the compact operator $T_{\left(f_1-g_1\right)}H^{*}_{\overline{g_2}}$, and we know that the duality of $T_{\left(f_1-g_1\right)}H^{*}_{\overline{g_2}}$ is also compact. It follows that $H_{\overline{g_2}}T_{\overline{\left(f_1-g_1\right)}}$ is compact. Thus we conclude by Lemma \ref{lem7} and Lemma \ref{lem8} that 
$$H_{\overline{g_2}}T_{\overline{\left(f_1-g_1\right)}}T_{\phi_z}-S_{\phi_z}H_{\overline{g_2}}T_{\overline{\left(f_1-g_1\right)}}=H_{\overline{g_2}}k_z\otimes T_{\left(f_1-g_1\right)\overline{\phi_z}}k_z=-(H_{\overline{g_2}}k_z)\otimes(VH_{\overline{\left(f_1-g_1\right)}}k_z)$$
and
\begin{equation}\label{eq2}
\begin{aligned}
\lim_{z\rightarrow m}\left\|-(H_{\overline{g_2}}k_z)\otimes\left[VH_{\overline{\left(f_1-g_1\right)}}k_z\right]\right\|
&=\lim\limits_{z\rightarrow m}\left\|H_{\overline{g_2}}k_z\right\|_2\cdot\left\|VH_{\overline{\left(f_1-g_1\right)}}k_z\right\|_2\\
&=\lim\limits_{z\rightarrow m}\left\|H_{\overline{g_2}}k_z\right\|_2\cdot\left\|H_{\overline{\left(f_1-g_1\right)}}k_z\right\|_2\\
&=0,
\end{aligned}
\end{equation}
which induces that $\lim\limits_{z\rightarrow m}\left\|H_{\overline{g_2}}k_z\right\|_2=0$ or $\lim\limits_{z\rightarrow m}\left\|H_{\overline{\left(f_1-g_1\right)}}k_z\right\|_2=0$, and we have by Lemma \ref{lem9} that $\overline{g_2}|_{S_m}\in H^{\infty}|_{S_m}$ or $\overline{\left(f_1-g_1\right)}|_{S_m}\in H^{\infty}|_{S_m}$. Using the same argument as above, we conclude that $f_2|_{S_m}\in H^{\infty}|_{S_m}$ or $\left(g_1-f_1\right)|_{S_m}\in H^{\infty}|_{S_m}$
because $T_{\overline{g_1-f_1}}H^{*}_{f_2}=VS_{g_1-f_1}H_{f_2}V$ is compact.

To finish the proof of this proposition, we are going to sort out above results. And we conclude that if $S_{f_1,g_1}S_{f_2,g_2}-S_{f_1f_2, g_1g_2}$ is compact, then for each support set $S_m$ of $m\in \mathcal{M}(H^\infty+C)$, 
the following conditions hold:\\
$(\rmnum{1})$ $\overline{\left(g_1-f_1\right)}|_{S_m}\in H^{\infty}|_{S_m}$ or $f_{2}|_{S_m}\in H^{\infty}|_{S_m}$;\\
$(\rmnum{2})$ $\left(f_1-g_1\right)|_{S_m}\in H^{\infty}|_{S_m}$ or $\overline{g_{2}}|_{S_m}\in H^{\infty}|_{S_m}$;\\
$(\rmnum{3})$ $\overline{g_2}|_{S_m}\in H^{\infty}|_{S_m}$ or $\overline{\left(f_1-g_1\right)}|_{S_m}\in H^{\infty}|_{S_m}$;\\
$(\rmnum{4})$ $f_2|_{S_m}\in H^{\infty}|_{S_m}$ or $\left(g_1-f_1\right)|_{S_m}\in H^{\infty}|_{S_m}$.\\
It gives that $\left(f_1-g_1\right)|_{S_m}$ is a constant or $f_{2}|_{S_m}, \overline{g_{2}}|_{S_m}\in H^{\infty}|_{S_m}$.

Finally, we will show  $\left(f_1-g_1\right)|_{S_m}=0$ if  $\left(f_1-g_1\right)|_{S_m}=c$ for some constant $c$, and $f_{2}|_{S_m}$ or $\overline{g_{2}}|_{S_m}$ is not in $H^{\infty}|_{S_m}$. We consider the following two cases:
\begin{enumerate}
    \item $\left(f_1-g_1\right)|_{S_m}=c$ and $\overline{g_{2}}|_{S_m}\notin H^{\infty}|_{S_m}$;
    \item $\left(f_1-g_1\right)|_{S_m}=c$ and $f_2|_{S_m}\notin H^{\infty}|_{S_m}$.
\end{enumerate}

\textbf{Case 1.}  If $\left(f_1-g_1\right)|_{S_m}=c$ and $\overline{g_{2}}|_{S_m}\notin H^{\infty}|_{S_m}$, then we have
\begin{equation}\label{eq10}
\begin{aligned}
\lim\limits_{z\rightarrow m}\|H_{\left(f_1-g_1\right)}k_z\|_2&=\lim\limits_{z\rightarrow m}\|H_{\overline{\left(f_1-g_1\right)}}k_z\|_2=0.
\end{aligned}
\end{equation}

Considering equality (\ref{eq8}) in the proof of Proposition \ref{prop14}, we have 

\begin{equation}\label{eq11}
\begin{aligned}
&\~~~\~~~ \left\|\left[VS_{\overline{\left(f_1-g_1\right)}}H_{\overline{g_2}}k_z\right]\otimes \left[VS_{\overline{\left(f_1-g_1\right)}}H_{\overline{g_2}}k_z\right]\right\|\\
&\leq\left\|\left[T_{\left(f_1-g_1\right)}H^{*}_{\overline{g_2}}\right]\left[T_{\left(f_1-g_1\right)}H^{*}_{\overline{g_2}}\right]^{*}-T_{\phi_z}^*\left[T_{\left(f_1-g_1\right)}H^{*}_{\overline{g_2}}\right]\left[T_{\left(f_1-g_1\right)}H^{*}_{\overline{g_2}}\right]^{*}T_{\phi_z}\right\|\\
&\~~\~~\~~\ +\left\|\left[T_{\left(f_1-g_1\right)}H^{*}_{\overline{g_2}}S_{\phi_{z}}^{*}H_{\overline{g_2}}k_{z}\right]\otimes\left[VH_{\overline{\left(f_1-g_1\right)}}k_{z}\right]\right\|\\
&\~~\~~\~~\ +\left\|\left[VH_{\overline{\left(f_1-g_1\right)}}k_{z}\right]\otimes\left[T_{\left(f_1-g_1\right)}H^{*}_{\overline{g_2}}S_{\phi_{z}}^{*}H_{\overline{g_2}}k_{z}\right]\right\|\\
&\~~\~~\~~\ +\left\|\left\{\left(H_{\overline{g_2}}k_{z}\right)\otimes\left[VH_{\overline{\left(f_1-g_1\right)}}k_{z}\right]\right\}^{*}\left\{\left(H_{\overline{g_2}}k_{z}\right)\otimes\left[VH_{\overline{\left(f_1-g_1\right)}}k_{z}\right]\right\}\right\|.
\end{aligned}
\end{equation}
Since $T_{\left(f_1-g_1\right)}H^{*}_{\overline{g_2}}$ is compact, we have by Lemma \ref{lem6} that
$$\lim\limits_{z\rightarrow m}\left\|\left[T_{\left(f_1-g_1\right)}H^{*}_{\overline{g_2}}\right]\left[T_{\left(f_1-g_1\right)}H^{*}_{\overline{g_2}}\right]^{*}-T_{\phi_z}^*\left[T_{\left(f_1-g_1\right)}H^{*}_{\overline{g_2}}\right]\left[T_{\left(f_1-g_1\right)}H^{*}_{\overline{g_2}}\right]^{*}T_{\phi_z}\right\|=0.$$
Thus we can conclude that the right side of inequality (\ref{eq11}) tends to 0 as $z\rightarrow m$, which implies that
$$\lim\limits_{z\rightarrow m}\left\|S_{\overline{\left(f_1-g_1\right)}}H_{\overline{g_2}}k_z\right\|=0.$$
On the other hand, we see
\begin{align*}\label{eq12}
\left\|H_{\overline{\left(f_1-g_1\right)g_2}}k_z\right\|
&=\left\|S_{\overline{f_1-g_1}}H_{\overline{g_2}}k_z+H_{\overline{\left(f_1-g_1\right)}}T_{\overline{g_2}}k_z\right\|\\
&\leq\left\|S_{\overline{f_1-g_1}}H_{\overline{g_2}}k_z\right\|+\left\|H_{\overline{\left(f_1-g_1\right)}}T_{\overline{g_2}}k_z\right\|,
\end{align*}
and $\lim\limits_{z\rightarrow m}\left\|H_{\overline{\left(f_1-g_1\right)}}T_{\overline{g_2}}k_z\right\|=0$ by Lemma \ref{lem12}. It follows that $\lim\limits_{z\rightarrow m}\left\|H_{\overline{\left(f_1-g_1\right)g_2}}k_z\right\|=0$ and
$$\left[\overline{\left(f_1-g_1\right)g_2}\right]\Big|_{S_m}\in H^{\infty}|_{S_m}.$$
If $c\neq 0$, then 
$$\overline{g_2}|_{S_m=}\frac{\left[\overline{\left(f_1-g_1\right)g_2}\right]\Big|_{S_m}}{\overline{c}}\in H^{\infty}|_{S_m},$$
which is contradicted with our assumption that $\overline{g_{2}}|_{S_m}\notin H^{\infty}|_{S_m}$. It gives that $c=0$.

\textbf{Case 2.}  If $\left(f_1-g_1\right)|_{S_m}=c$ and $f_{2}|_{S_m}\notin H^{\infty}|_{S_m}$, then we consider the compact operator $T_{\overline{g_1-f_1}}H_{f_2}^{*}=VS_{\left(g_1-f_1\right)}H_{f_2}V$. Using the same argument, we conclude that 
$$\lim\limits_{z\rightarrow m}\left\|H_{\left(g_1-f_1\right)f_2}k_z\right\|=0$$ 
and $\left[\left(g_1-f_1\right)f_2\right]\big|_{S_m}\in H^{\infty}|_{S_m}$. If $c\neq 0$, then
 $$f_{2}|_{S_m}=\frac{\left[\left(g_1-f_1\right)f_2\right]\big|_{S_m}}{-c}\in H^{\infty}|_{S_m},$$
 which is contradicted with our assumption that $f_{2}|_{S_m}\notin H^{\infty}|_{S_m}$. It follows that $c=0$, to complete the proof.
\end{proof}

\section{Essentially normal Cauchy singular integral operator}
In this section, we will characterize essentially normality of $S_{f, g}$.
\begin{thm}\label{thm16}
Let $f, g\in L^{\infty}$. Then $S_{f, g}$ is essentially normal if and only if for each support set $S$, one of the following holds:\\
$(i)$ $f|_{S}$ and $g|_{S}$ are constants;\\
$(ii)$ there is some unimodular constant $c$ such that$\left(g-cf\right)\big|_{S}$ is a constant, and
$$\left[(c-1)|f|^2+d\overline{f}-c\overline{d}f\right]\big|_{S}\in H^{\infty}|_{S},$$  
where $d=\left(g-cf\right)\big|_{S}$. 
\end{thm}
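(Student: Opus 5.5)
The plan is to reduce essential normality of $S_{f,g}$ to the compactness of explicit Hankel/Toeplitz expressions, as Lemma \ref{lem2} does for the semi-commutator, and then to localize at each $m\in\mathcal M(H^\infty+C)$ using Lemmas \ref{lem5}--\ref{lem9}.

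\textbf{Step 1: the self-commutator in matrix form.} Using Lemma \ref{lem1}, I will compute $S_{f,g}^*S_{f,g}-S_{f,g}S_{f,g}^*$ entrywise. For the diagonal entries I use the identities from the proof of Lemma \ref{lem2}:
$$T_{\overline f}T_f+H_f^*H_f=T_{|f|^2},\qquad S_{\overline g}S_g+H_{\overline g}H_{\overline g}^*=S_{|g|^2},$$
and their analogues with $f,g$ replaced by $\overline f,\overline g$. For the off-diagonal entries I use $H_{uv}=H_uT_v+S_uH_v$.

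I expect the $(1,1)$ entry to be $H_{\overline f}^*H_{\overline f}-H_{\overline g}^*H_{\overline g}$, and the $(2,2)$ entry to be $H_gH_g^*-H_fH_f^*$. Conjugating by $V$ (Remark \ref{rem4}) turns the latter into $H_g^*H_g-H_f^*H_f$. The $(2,1)$ entry should be
$$H_fT_{\overline g-\overline f}+S_{f-g}H_{\overline g},$$
and the $(1,2)$ entry is its adjoint. So $S_{f,g}$ is essentially normal if and only if these three operators are compact.

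\textbf{Step 2: necessity, localized.} Fix $m$.

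For the diagonal entries, I apply Lemma \ref{lem5} to $H_{\overline f}^*H_{\overline f}-H_{\overline g}^*H_{\overline g}$ and Lemma \ref{lem6} to the compact operator. This gives
$$\|(H_{\overline f}k_z)\otimes(H_{\overline f}k_z)-(H_{\overline g}k_z)\otimes(H_{\overline g}k_z)\|\to0 \quad\text{as } z\to m.$$
Two rank-one positive operators can be close only if both vectors are small, or the vectors are nearly parallel with nearly equal norms.
\begin{itemize}
\item In the first case, Lemma \ref{lem9} gives $\overline f|_S,\overline g|_S\in H^\infty|_S$.
\item In the second case, I would try to show that the phase $\lambda_z$ with $H_{\overline g}k_z\approx\lambda_zH_{\overline f}k_z$ can be frozen to a constant along $z\to m$. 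This should follow the Gorkin--Zheng argument behind \cite{GpZ}, exploiting that $z\mapsto\langle H_{\overline g}k_z,H_{\overline f}k_z\rangle$ is controlled by Berezin-type transforms which are continuous on the fibre. It yields a unimodular $c$ with $\lim_{z\to m}\|H_{\overline{g-cf}}k_z\|=0$.
\end{itemize}
The same reasoning on $H_g^*H_g-H_f^*H_f$ yields either $f|_S,g|_S\in H^\infty|_S$, or $\lim_{z\to m}\|H_{g-c'f}k_z\|=0$ for some unimodular $c'$.

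I then combine the alternatives. Both $f$ and $\overline f$ in $H^\infty|_S$ forces $f|_S$ constant, and mixed cases reduce similarly, giving alternative (i). Otherwise I must show $c'=c$, so that $g-cf$ and its conjugate both lie in $H^\infty|_S$; then $d=(g-cf)|_S$ is constant.

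For the off-diagonal entry, I substitute $g=cf+d$ on $S$ into $H_fT_{\overline g-\overline f}+S_{f-g}H_{\overline g}$. Localizing via Lemmas \ref{lem7} and \ref{lem8}, together with Lemma \ref{lem12} to discard the terms killed by $H_{\overline{g-cf}}k_z\to0$, should reduce everything to $\lim_{z\to m}\|H_{(c-1)|f|^2+d\overline f-c\overline d f}k_z\|=0$. The quantity is recovered as in Case 1 of Proposition \ref{prop11}, writing $S_uH_vk_z=H_{uv}k_z-H_uT_vk_z$. Lemma \ref{lem9} then gives the stated $H^\infty|_S$ condition.

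\textbf{Step 3: sufficiency.} Conversely, under (i) or (ii) at every $S$, I will show each entry is compact in the manner of Proposition \ref{prop14}. The diagonal entries lie in the Toeplitz algebra and have zero symbol. Lemma \ref{lem5} together with the localized Hankel estimates shows $\|K-T_{\phi_z}^*KT_{\phi_z}\|\to0$ at every $m$, so \cite[Theorem 12]{GkZ2} and \cite[Corollary 6]{BH} give compactness. For the off-diagonal entry $X$, I treat $XX^*$ the same way, using the expansion (\ref{eq8}) with the new symbols.

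\textbf{Main obstacle.} I expect the hardest step to be in Step 2: extracting a single unimodular constant $c$ from the asymptotic parallelism of $H_{\overline f}k_z$ and $H_{\overline g}k_z$, and showing that the constant obtained from the $(2,2)$ entry is the same one. My first attempt would be to import the corresponding lemma from the Gorkin--Zheng characterization of essentially commuting Toeplitz operators \cite{GpZ}, where exactly this kind of "nontrivial linear combination is constant on $S$" conclusion is derived from rank-one asymptotics.
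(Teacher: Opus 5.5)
The overall route is the paper's: the block form from Lemma~\ref{lem1}, localization through Lemmas~\ref{lem5}--\ref{lem9}, and the Toeplitz-algebra criterion for sufficiency. However, the necessity part has a genuine gap exactly where you placed the main obstacle.

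\textbf{Where $c'=c$ comes from.} The equality $c'=c$ cannot be obtained from the diagonal entries, by a Gorkin--Zheng lemma or otherwise. Take $u$ inner and not a finite Blaschke product, $b\neq b'$ unimodular, and set $f=u+\overline u$, $g=bu+b'\overline u$. Then $H_f=H_{\overline f}=H_{\overline u}$, $H_g=b'H_{\overline u}$ and $H_{\overline g}=\overline b H_{\overline u}$. So both diagonal entries of the self-commutator vanish identically, and your constants are $c=b$, $c'=b'$; yet $S_{f,g}$ is not essentially normal.

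What forces $c'=c$ is the off-diagonal entry, which you only bring in after assuming $g=cf+d$; it has to be used first. Since $S_{\phi_z}H_\psi=H_\psi T_{\phi_z}$ for every $\psi\in L^\infty$ (equations (\ref{eq22})--(\ref{eq23})), Lemma~\ref{lem7} shows that $S_{\phi_z}K_2-K_2T_{\phi_z}$ equals, up to sign, $L_z=(H_gk_z)\otimes(VH_{\overline g}k_z)-(H_fk_z)\otimes(VH_{\overline f}k_z)$. Compactness and Lemma~\ref{lem8} then give $\|L_z\|\to0$ as $z\to m$. Substituting $H_{\overline g}k_z\approx\overline c\,H_{\overline f}k_z$ and $H_gk_z\approx c'H_fk_z$ yields $|c'\overline c-1|\,\|H_fk_z\|_2\,\|H_{\overline f}k_z\|_2\to0$. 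Hence $c'=c$ unless $f|_S$ or $\overline f|_S$ lies in $H^\infty|_S$. In the example, $\|L_z\|=|b-b'|\,(1-|u(z)|^2)$, which does not tend to $0$. This coupling of $H_fk_z$ with $H_{\overline f}k_z$ is the input any Gorkin--Zheng type argument needs, and only the off-diagonal entry supplies it.

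\textbf{The mixed cases.} Your claim that the mixed cases yield alternative (i) is false. If $\overline f|_S,\overline g|_S\in H^\infty|_S$ and $(g-c'f)|_S\in H^\infty|_S$, you only get that $g-c'f$ is constant on $S$. For instance, with $f=g=\overline u$ the operator $S_{f,g}=M_{\overline u}$ is unitary, while (i) fails on every support set where $u$ is nonconstant. These cases belong to (ii). So you must also run for them the extraction of the $H^\infty|_S$ condition on $(c-1)|f|^2+d\overline f-c\overline d f$ from $K_2$, via $\|K_2^*Vk_z\|_2\to0$, (\ref{eq24}) and (\ref{eq21}). The paper does exactly this after reducing its cases (1)--(3) to ``$g-cf$ constant on $S$''.

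\textbf{Smaller points.} Freezing the phase needs no import. The coefficient $a(z)$ is bounded, so you can pass to a subnet on which it converges. Lemma~\ref{lem9}, via (3)$\Rightarrow$(1)$\Rightarrow$(2), then upgrades the subnet statement to the full limit. In Step 3, your $X$ maps $H^2$ into $(H^2)^\perp$, so the operator to place in the Toeplitz algebra is $X^*X$, not $XX^*$. The relevant expansion is (\ref{eq24}), built from $L_z$, rather than (\ref{eq8}). The hypothesis of (ii) enters precisely through $\|X^*Vk_z\|_2\to0$.
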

\begin{proof}
By the Lemma \ref{lem1}, we have
\begin{equation}\label{eq14}
\begin{aligned}
&\~~~\~~~\~~~S_{f, g}S^{*}_{f, g}-S^{*}_{f, g}S_{f, g}\\
&=\left (\begin{matrix}
  T_{f} & H_{\overline{g}}^* \\
    H_{f} & S_{g}
  \end{matrix}\right)\left (\begin{matrix}
    T_{\overline{f}} & H_{f}^* \\
    H_{\overline{g}} & S_{\overline{g}}
  \end{matrix}\right )-\left(\begin{matrix}
  T_{\overline{f}} & H_{f}^* \\
    H_{\overline{g}} & S_{\overline{g}}
  \end{matrix}\right)\left (\begin{matrix}
  T_{f} & H_{\overline{g}}^* \\
    H_{f} & S_{g}
  \end{matrix}\right)\\
  &=\left (\begin{matrix}
  T_{f}T_{\overline{f}}+H_{\overline{g}}^{*}H_{\overline{g}}-T_{\overline{f}}T_{f}-H^{*}_{f}H_{f}   &  T_{f}H_{f}^{*}+H_{\overline{g}}^{*}S_{\overline{g}}-T_{\overline{f}}H_{\overline{g}}^{*}-H_{f}^{*}S_{g} \\
  H_{f}T_{\overline{f}}+S_{g}H_{\overline{g}}-H_{\overline{g}}T_{f}-S_{\overline{g}}H_{f} & H_{f}H_{f}^{*}+S_{g}S_{\overline{g}}-H_{\overline{g}}H_{\overline{g}}^{*}-S_{\overline{g}}S_{g}
  \end{matrix}\right)\\ 
&=\left (\begin{matrix}
  H_{\overline{g}}^{*}H_{\overline{g}}-H^{*}_{\overline{f}}H_{\overline{f}} &  T_{f}H_{f}^{*}+H_{|g|^2}^{*}-T_{g}H^{*}_{g}-H_{f\overline{g}}^{*}\\
   H_{f}T_{\overline{f}}+H_{|g|^2}-H_{g}T_{\overline{g}}-H_{f\overline{g}} & H_{f}H_{f}^{*}-H_{g}H_{g}^{*}
  \end{matrix}\right).
 \end{aligned}
 \end{equation}
 The last equality follows from
 $$T_{f}T_{\overline{f}}-T_{\overline{f}}T_{f}=H_{f}^{*}H_f-H_{\overline{f}}^{*}H_{\overline{f}},$$
 $$H_{\overline{g}}^{*}S_{\overline{g}}=H_{|g|^2}^{*}-T_{g}H^{*}_{g},\ T_{\overline{f}}H_{\overline{g}}^{*}+H_{f}^{*}S_{g}=H_{f\overline{g}}^{*},$$
 $$S_{g}H{\overline{g}}=H_{|g|^{2}}-H_{g}T_{\overline{g}},\ S_{\overline{g}}H_{f}+H_{\overline{g}}T_{f}=H_{f\overline{g}},$$
 and
 $$S_{g}S_{\overline{g}}-S_{\overline{g}}S_{g}=H_{\overline{g}}H_{\overline{g}}^{*}-H_{g}H_{g}^{*}.$$
 Let\begin{equation}\label{eq13}\begin{aligned}
 K_1&= H_{\overline{g}}^{*}H_{\overline{g}}-H^{*}_{\overline{f}}H_{\overline{f}},\\
 K_2&=H_{f}T_{\overline{f}}+H_{|g|^2}-H_{g}T_{\overline{g}}-H_{f\overline{g}},\\
 K_3&=H_{f}H_{f}^{*}-H_{g}H_{g}^{*}.
 \end{aligned}
 \end{equation}
 Then 
 \begin{align*}S_{f, g}S^{*}_{f, g}-S^{*}_{f, g}S_{f, g}=\left (\begin{matrix}
    K_{1} & K_2 \\
    K_{2}^{*} & K_3
  \end{matrix}\right),
  \end{align*}
 which induces that $S_{f, g}$ is essentially normal if and only if $K_1$, $K_2$, and $K_3$ are compact.
 
 Firstly, we will show the sufficient part of this theorem.  We have by Lemma \ref{lem5} that
 $$K_{1}-T_{\phi_{z}}^{*}K_{1}T_{\phi_{z}}=V\left[(H_{\overline{g}}k_{z})\otimes(H_{\overline{g}}k_{z})-(H_{\overline{f}}k_{z})\otimes(H_{\overline{f}}k_{z})\right]V^{*},$$
 and we are going to prove that each condition in Theorem \ref{thm16} can implies that
 \begin{equation}\label{eq16}
 \lim\limits_{z\rightarrow m}\|K_{1}-T_{\phi_{z}}^{*}K_{1}T_{\phi_{z}}\|=0,
 \end{equation}
 for each $m\in \mathcal M(H^\infty+C)$. If condition $(i)$ holds, then
 $$\lim\limits_{z\rightarrow m}\|H_{\overline{f}}k_z\|_2=\lim\limits_{z\rightarrow m}\|H_{\overline{g}}k_z\|_2=0$$
 and by the triangle inequality, we have 
 $$\lim\limits_{z\rightarrow m}\left\|(H_{\overline{g}}k_{z})\otimes(H_{\overline{g}}k_{z})-(H_{\overline{f}}k_{z})\otimes(H_{\overline{f}}k_{z})\right\|=0,$$
 to obtain
 $$\lim\limits_{z\rightarrow m}\|K_{1}-T_{\phi_{z}}^{*}K_{1}T_{\phi_{z}}\|=0.$$
Now we suppose condition $(ii)$ holds, then
\begin{align*}
&\~~~~\~~~~(H_{\overline{g}}k_{z})\otimes(H_{\overline{g}}k_{z})-(H_{\overline{f}}k_{z})\otimes(H_{\overline{f}}k_{z})\\
&=[H_{\overline{\left(g-cf+cf\right)}}k_{z}]\otimes[H_{\overline{\left(g-cf+cf\right)}}k_{z}]-(H_{\overline{f}}k_{z})\otimes(H_{\overline{f}}k_{z})\\
&=[H_{\overline{\left(g-cf\right)}}k_{z}]\otimes[H_{\overline{\left(g-cf\right)}}k_{z}]+(H_{\overline{cf}}k_{z})\otimes[H_{\overline{\left(g-cf\right)}}k_{z}]\\
&\~~~~\~~~~+[H_{\overline{\left(g-cf\right)}}k_{z}]\otimes(H_{\overline{cf}}k_{z})+(H_{\overline{cf}}k_{z})\otimes(H_{\overline{cf}}k_{z})-(H_{\overline{f}}k_{z})\otimes(H_{\overline{f}}k_{z})\\
&=[H_{\overline{\left(g-cf\right)}}k_{z}]\otimes[H_{\overline{\left(g-cf\right)}}k_{z}]+\overline{c}(H_{\overline{f}}k_{z})\otimes[H_{\overline{\left(g-cf\right)}}k_{z}]+c[H_{\overline{\left(g-cf\right)}}k_{z}]\otimes(H_{\overline{f}}k_{z})
\end{align*}
and 
$$\left\|(H_{\overline{g}}k_{z})\otimes(H_{\overline{g}}k_{z})-(H_{\overline{f}}k_{z})\otimes(H_{\overline{f}}k_{z})\right\|\leq\left\|H_{\overline{\left(g-cf\right)}}k_{z}\right\|_{2}^{2}+2\|H_{\overline{f}}k_{z}\|_{2}\cdot\left\|H_{\overline{\left(g-cf\right)}}k_{z}\right\|_{2}.$$
Since $\left(g-cf\right)\big|_{S_m}$ is a constant for each support set $S_m$ of $m$, we obtain that $\lim\limits_{z\rightarrow m}\left\|H_{\overline{\left(g-cf\right)}}k_{z}\right\|=0$ and
 $$\lim\limits_{z\rightarrow m}\left\|(H_{\overline{g}}k_{z})\otimes(H_{\overline{g}}k_{z})-(H_{\overline{f}}k_{z})\otimes(H_{\overline{f}}k_{z})\right\|=0.$$
 On the other hand, noting
 $$K_1= H_{\overline{g}}^{*}H_{\overline{g}}-H^{*}_{\overline{f}}H_{\overline{f}}=T_{|g|^{2}}-T_{g}T_{\overline{g}}-T_{|f|^{2}}+T_{f}T_{\overline{f}},$$
 which is a finite sum of finite products of Toeplitz operators. Using the same method as in the proof of Proposition
 \ref{prop14}, we conclude by (\ref{eq16}) that $K_{1}$ is compact.
 
 Using
$$VT_{\varphi}=S_{\overline{\varphi}}V,\ VH_{\varphi}=H_{\varphi}^{*}V\ \mathrm{and}\ V^2=I$$
again,  we have
\begin{align*}\label{4.c}
VK_{3}V&=V\left(H_{f}H_{f}^{*}-H_{g}H_{g}\right)V\\
     &=H_{f}^{*}V^2H_{f}-H_{g}^{*}V^2H_{g}\\
     &=H_{f}^{*}H_{f}-H_{g}^{*}H_{g}.
\end{align*}
Using the same arguments as above, we conclude that
$$H_{f}^{*}H_{f}-H_{g}^{*}H_{g}$$
is compact, which gives us that $K_3$ is also compact. To complete the proof of the sufficient part, we need to 
show that $K_{2}$ is compact. Observe that the operator $K_2K_2^*$ is in the Toeplitz algebra $\mathscr{T}_{L^{\infty}}$ and the symbol map maps $K_{2}^{*}K_{2}$ to 0.
By \cite[Theorem 12]{GkZ2} again, we need only to prove that  $$\lim\limits_{z\rightarrow m} \|K_{2}^{*}K_{2}-T_{\phi_z}^{*}K_{2}^{*}K_{2}T_{\phi_z}\|=0.$$
Since
\begin{equation}\label{eq22}
\begin{aligned}
&\~~~~\~~~~\left(S_{\phi_{z}}H_{|g|^2}-H_{|g|^2}T_{\phi_{z}}\right)(h)\\
&=\left(I-P\right)\phi_{z}\left(I-P\right)\left(|g|^2h\right)-\left(I-P\right)\left(|g|^2\phi_{z}h\right)\\
&=\left(I-P\right)\left(\phi_{z}|g|^2h\right)-\left(I-P\right)\phi_{z}P\left(|g|^2h\right)-\left(I-P\right)\left(|g|^2\phi_{z}h\right)\\
&=0
\end{aligned}
\end{equation}
and
\begin{equation}\label{eq23}
\begin{aligned}
&\~~~~\~~~~\left(S_{\phi_{z}}H_{f\overline{g}}-H_{f\overline{g}}T_{\phi_{z}}\right)(h)\\
&=\left(I-P\right)\phi_{z}\left(I-P\right)\left(f\overline{g}h\right)-\left(I-P\right)\left(f\overline{g}\phi_{z}h\right)\\
&=\left(I-P\right)\left(\phi_{z}f\overline{g}h\right)-\left(I-P\right)\phi_{z}P\left(f\overline{g}h\right)-\left(I-P\right)\left(f\overline{g}\phi_{z}h\right)\\
&=0
\end{aligned}
\end{equation}
 for any $h\in H^{2}$, we have by Lemma \ref{lem7} that
 $$S_{\phi_{z}}K_2-K_2T_{\phi_{z}}=(H_{g}k_z)\otimes(VH_{\overline{g}}k_z)-(H_{f}k_z)\otimes(VH_{\overline{f}}k_z).$$
Let $$L_{z}=(H_{g}k_z)\otimes(VH_{\overline{g}}k_z)-(H_{f}k_z)\otimes(VH_{\overline{f}}k_z),$$
then condition $(i)$ or $(ii)$ can implies that 
\begin{equation}\label{eq17}
\lim\limits_{z\rightarrow m}\|L_z\|=0.
\end{equation}
Indeed, it is obvious that (\ref{eq17}) holds if $f|_{S_m}$ and $g|_{S_m}$ are constant for each support set $S_{m}$ of $m$. Else if condition $(ii)$ holds, then
\begin{equation}\label{eq18}
\begin{aligned}
&\~~~~\~~~~(H_{g}k_z)\otimes(VH_{\overline{g}}k_z)-(H_{f}k_z)\otimes(VH_{\overline{f}}k_z)\\
&=[H_{\left(g-cf+cf\right)}k_z]\otimes[VH_{\overline{\left(g-cf+cf\right)}}k_z]-(H_{f}k_z)\otimes(VH_{\overline{f}}k_z)\\
&=[H_{\left(g-cf\right)}k_z]\otimes[VH_{\overline{\left(g-cf\right)}}k_z]+c(H_{f}k_z)\otimes[VH_{\overline{\left(g-cf\right)}}k_z]\\
&\~~~~\~~~~+\overline{c}[H_{\left(g-cf\right)}k_z]\otimes(VH_{\overline{f}}k_z)+|c|^{2}(H_{f}k_z)\otimes(VH_{\overline{f}}k_z)-(H_{f}k_z)\otimes(VH_{\overline{f}}k_z)\\
&=[H_{\left(g-cf\right)}k_z]\otimes[VH_{\overline{\left(g-cf\right)}}k_z]-c(H_{f}k_z)\otimes[VH_{\overline{\left(g-cf\right)}}k_z]\\
&\~~~~\~~~~-\overline{c}[H_{\left(g-cf\right)}k_z]\otimes(VH_{\overline{f}}k_z)
\end{aligned}
\end{equation}
and
\begin{equation}\label{eq19}
\begin{aligned}
&\~~~~\~~~~\left\|(H_{g}k_z)\otimes(VH_{\overline{g}}k_z)-(H_{f}k_z)\otimes(VH_{\overline{f}}k_z)\right\|\\
&\leq\left\|H_{\left(g-cf\right)}k_z\right\|_{2}\cdot\left\|H_{\overline{\left(g-cf\right)}}k_z\right\|_{2}+|c|\cdot\left\|H_{f}k_z\right\|_{2}\cdot\left\|H_{\overline{\left(g-cf\right)}}k_z\right\|_{2}\\
&\~~~~\~~~~+|\overline{c}|\cdot\left\|H_{\left(g-cf\right)}k_z\right\|_{2}\cdot\left\|H_{\overline{f}}k_z\right\|_{2}\\
&=\left\|H_{\overline{\left(g-cf\right)}}k_z\right\|_{2}\cdot\left\|H_{\overline{\left(g-cf\right)}}k_z\right\|_{2}+\left\|H_{f}k_z\right\|_{2}\cdot\left\|H_{\overline{\left(g-cf\right)}}k_z\right\|_{2}\\
&\~~~~\~~~~+\left\|H_{\left(g-cf\right)}k_z\right\|_{2}\cdot\left\|H_{\overline{f}}k_z\right\|_{2}.
\end{aligned}
\end{equation}
Since $$\lim\limits_{z\rightarrow m}\left\|H_{\left(g-cf\right)}k_z\right\|_{2}=\lim\limits_{z\rightarrow m}\left\|H_{\overline{\left(g-cf\right)}}k_z\right\|_{2}=0,$$
we have by the inequality (\ref{eq19}) that $\lim\limits_{z\rightarrow m}\|L_{z}\|=0$. Furthermore,
\begin{equation}\label{eq24}
\begin{aligned}
T_{\phi_{z}}^{*}K_{2}^{*}K_{2}T_{\phi_{z}}&=\left(K_2T_{\phi_{z}}\right)^{*}K_{2}T_{\phi_{z}}\\
&=\left(S_{\phi_{z}}K_2-L_{z}\right)^{*}\cdot\left(S_{\phi_{z}}K_2-L_{z}\right)\\
&=K_{2}^{*}S_{\phi_{z}}^{*}S_{\phi_{z}}K_{2}-K_{2}^{*}S_{\phi_{z}}^{*}L_z-L_{z}^{*}S_{\phi_{z}}K_{2}
+L_{z}^{*}L_{z}\\
&=K_{2}^{*}\left(I-Vk_z\otimes Vk_z\right)K_{2}-K_{2}^{*}S_{\phi_{z}}^{*}L_z-L_{z}^{*}S_{\phi_{z}}K_{2}
+L_{z}^{*}L_{z}\\
&=K_{2}^{*}K_{2}-\left(K_{2}^{*}Vk_z\right)\otimes \left(K_{2}^{*}Vk_z\right)-K_{2}^{*}S_{\phi_{z}}^{*}L_z-L_{z}^{*}S_{\phi_{z}}K_{2}
+L_{z}^{*}L_{z}
\end{aligned}
\end{equation}
and
 \begin{align*}
&\~~~~\~~~~ \left\|K_{2}^{*}K_{2}-T_{\phi_z}^{*}K_{2}^{*}K_{2}T_{\phi_z}\right\|\\
&=\left\|\left(K_{2}^{*}Vk_z\right)\otimes \left(K_{2}^{*}Vk_z\right)+K_{2}^{*}S_{\phi_{z}}^{*}L_{z}+L_{z}^{*}S_{\phi_{z}}K_{2}-L_{z}^{*}L_{z}\right\|\\
 &\leq\left\|K_{2}^{*}Vk_z\right\|_{2}^{2}+\left\|K_{2}^{*}S_{\phi_{z}}^{*}L_{z}\right\|+\left\|L_{z}^{*}S_{\phi_{z}}K_{2}\right\|+\left\|L_{z}^{*}L_{z}\right\|,
\end{align*}
it is sufficient to show
\begin{align}\label{eq20}
\lim\limits_{z\rightarrow m} \|K_{2}^{*}Vk_z\|_2=0
\end{align}
as $\lim\limits_{z\rightarrow m}\|L_z\|=0$. For this purpose, we will check that condition $(ii)$ of 
Theorem \ref{thm16} can imply the equality (\ref{eq20}).
Computing $K_{2}^{*}Vk_{z}$ directly, we obtain
\begin{equation}\label{eq21}
\begin{aligned}
K_{2}^{*}Vk_{z}&=V\left(S_{\overline{f}}H_{f}+H_{|g|^{2}}k_{z}-S_{\overline{g}}H_{g}k_{z}-H_{f\overline{g}}k_{z}\right)k_z\\
&=V\left(S_{\overline{f}}H_{f}+H_{\overline{g}}T_{g}-H_{f\overline{g}}\right)k_z\\
&=V\left[S_{\overline{f}}H_{f}+H_{\overline{\left(g-cf+cf\right)}}T_{g}-H_{f\overline{g}}\right]k_z\\
&=V\left[S_{\overline{f}}H_{f}+H_{\overline{\left(g-cf\right)}}T_{g}+\overline{c}H_{\overline{f}}T_{\left(g-cf+cf\right)}-H_{f\overline{g}}\right]k_z\\
&=VH_{\overline{\left(g-cf\right)}}T_{g}k_{z}+V\left[S_{\overline{f}}H_{f}+H_{\overline{f}}T_{f}+\overline{c}H_{\overline{f}}T_{\left(g-cf\right)}-H_{f\overline{g}}\right]k_z\\
&=VH_{\overline{\left(g-cf\right)}}T_{g}k_{z}+V\left\{H_{|f|^{2}}+\overline{c}\left[H_{\overline{f}\left(g-cf\right)}-S_{\overline{f}}H_{\left(g-cf\right)}\right]-H_{f\overline{g}}\right\}k_z\\
&=VH_{\overline{\left(g-cf\right)}}T_{g}k_{z}-cVS_{\overline{f}}H_{\left(g-cf\right)}k_{z}\\
&\~~~~\~~~~+V\left[H_{|f|^{2}}+\overline{c}H_{\overline{f}\left(g-cf\right)}-H_{f\overline{\left(g-cf+cf\right)}}\right]k_z\\
&=VH_{\overline{\left(g-cf\right)}}T_{g}k_{z}-cVS_{\overline{f}}H_{\left(g-cf\right)}k_{z}\\
&\~~~~\~~~~+V\left[\left(1-\overline{c}\right)H_{|f|^{2}}+\overline{c}H_{\overline{f}\left(g-cf\right)}-H_{f\overline{\left(g-cf\right)}}\right]k_z\\
&=VH_{\overline{\left(g-cf\right)}}T_{g}k_{z}-cVS_{\overline{f}}H_{\left(g-cf\right)}k_{z}+cVH_{\left[\left(c-1\right)|f|^{2}+\overline{f}\left(g-cf\right)-cf\overline{\left(g-cf\right)}\right]}k_{z}.
\end{aligned}
\end{equation}  
Since $\left(g-cf\right)\big|_{S_{m}}=c$ and 
\begin{align*}
&\~~~~\~~~~\left[\left(c-1\right)|f|^{2}+\overline{f}\left(g-cf\right)-cf\overline{\left(g-cf\right)}\right]\big|_{S_{m}}\\
&=\left[\left(c-1\right)|f|^{2}+d\overline{f}-c\overline{d}f\right]\big|_{S_{m}}\in H^{\infty}\big|_{S_{m}},
\end{align*}
According to the triangle inequality, we have by Lemma \ref{lem9}, Lemma \ref{lem12} and (\ref{eq21}) that
$$\lim\limits_{z\rightarrow m}\left\|K_{2}^{*}Vk_{z}\right\|=0.$$ 
 
 Next we are going to show the necessary part of this theorem. By the Lemma \ref{lem5} and Lemma \ref{lem6}, we have
 $$\lim\limits_{z\rightarrow m} \|K_{1}-T_{\phi_z}^*K_{1}T_{\phi_z}\|=0$$
 and
$$\lim\limits_{z\rightarrow m}\left\|\big[(H_{\overline{g}}k_z)\otimes(H_{\overline{g}}k_z)-(H_{\overline{f}}k_z)\otimes(H_{\overline{f}}k_z)\big]\right\|=0,$$
for each $m\in \mathcal M(H^\infty+C)$. It follows that
\begin{equation}\label{eq15}
(H_{\overline{g}}k_z)\otimes(H_{\overline{g}}k_z)
=(H_{\overline{f}}k_z)\otimes(H_{\overline{f}}k_z)+\varepsilon(z),
\end{equation}
where the operator $\varepsilon(z)$ satisfies that $\lim\limits_{z\rightarrow m}\|\varepsilon(z)\|=0.$

In the following, we still use the same notation $\varepsilon(z)$ to denote the various terms such that $$\|\varepsilon(z)\|\rightarrow 0\ \ \ \ \ (z\rightarrow m)$$ for simplicity.

If $$\varliminf_{z\rightarrow m}\|H_{\overline{g}}k_z\|_2=0,$$ then we have by Lemma \ref{lem9} and (\ref{eq15}) that $\overline{g}\big|_{S_m}\in H^{\infty}|_{S_m}$ for each support set $S_m$ of $m$, and 
$$\lim\limits_{z\rightarrow m}\|H_{\overline{f}}k_{z}\|_2=0,$$
to imply that $\overline{f}\big|_{S_m}\in H^{\infty}|_{S_m}$.

Now we need to analyse the case of
$$\varliminf_{z\rightarrow m}\|H_{\overline{g}}k_z\|_2> 0.$$
By (\ref{eq15}), we have
\begin{eqnarray*}
\langle H_{\overline{g}}k_z,H_{\overline{g}}k_z\rangle H_{\overline{g}}k_z=\langle H_{\overline{f}}k_z,H_{\overline{g}}k_z\rangle H_{\overline{f}}k_z+\varepsilon(z).
\end{eqnarray*}
Thus there exists a constant $a(z)$ depending  on $z$ such that
$$H_{\overline{g}}k_z=a(z)H_{\overline{f}}k_z+\varepsilon(z),$$
where $a(z)$ satisfies that
$$|a(z)|=\left|\frac{\langle H_{\overline{f}}k_z,H_{\overline{g}}k_z\rangle }{\left\|H_{\overline{g}}k_z\right\|_2^2}\right|\leqslant \frac{\left\|f\right\|_{\infty}}{\varliminf\limits_{z\rightarrow m}\|H_{\overline{g}}k_z\|_2}$$
for all $z\in \mathcal{O}(m)\cap \mathbb D$, so $|a(z)|$ is  bounded for $z\in \mathcal{O}(m)\cap \mathbb D$. For $m\in \mathcal{M}(H^\infty+C)$,
 $\mathcal{O}(m)$  denotes a neighbourhood of it in $\mathcal M(H^\infty)$. By the boundedness of $a(z)$ and by the corona theorem, there exist a net $\{z_\beta\}$ and a constant $a\in \mathbb C$ such that 
$$\lim_{\beta}z_\beta=m\ \mathrm{and}\ \lim_{\beta}a(z_\beta)=a.$$
Therefore, by the the equivalence between conditions (2) and (3) in Lemma \ref{lem9}, we obtain 
$$H_{\overline{g}}k_z=aH_{\overline{f}}k_z+\varepsilon(z).$$
It follows that $\left(\overline{g}-a\overline{f}\right)\big|_{S_m}\in H^{\infty}|_{S_m}$. Hence we have
\begin{align*}
&\~~~~\~~~~(H_{\overline{g}}k_{z})\otimes(H_{\overline{g}}k_{z})\\
&=(H_{a\overline{f}}k_{z})\otimes(H_{a\overline{f}}k_{z})+\varepsilon(z)\\
&=|a|^2(H_{\overline{f}}k_{z})\otimes(H_{\overline{f}}k_{z})+\varepsilon(z)\\
&=(H_{\overline{f}}k_{z})\otimes(H_{\overline{f}}k_{z})+\varepsilon(z).
\end{align*}
Since 
$$\varliminf\limits_{z\rightarrow m}\|H_{\overline{f}}k_{z}\|>0,$$ 
we conclude that $|a|=1$. Similarly, we consider $K_3$ with the same argument and conclude that $\left(g-bf\right)\big|_{S_m}\in H^{\infty}|_{S_m}$ for some unimodular constant $b$, or  $f|_{S_m}$ and $g|_{S_m}$ are in $H^{\infty}|_{S_m}$. So we consider $K_2$ under the following four conditions:
\begin{enumerate}
    \item $\left(\overline{g}-a\overline{f}\right)\big|_{S_{m}}$, $\left(g-bf\right)\big|_{S_{m}}$ are in $H^{\infty}|_{S_{m}}$ for some unimodular constants $a$ and $b$;
    \item $\left(\overline{g}-a\overline{f}\right)\big|_{S_{m}}$, $f|_{S_{m}}$ and $g|_{S_{m}}$ are in $H^{\infty}|_{S_{m}}$ for some unimodular constant $a$;
    \item $\overline{f}|_{S_{m}}$, $\overline{g}|_{S_{m}}$ and $\left(g-bf\right)\big|_{S_{m}}$ are in $H^{\infty}|_{S_{m}}$ for some unimodular constant $b$;
    \item $\overline{f}|_{S_{m}}$, $\overline{g}|_{S_{m}}$, $f|_{S_{m}}$ and $g|_{S_{m}}$ are in $H^{\infty}$.
\end{enumerate}
The condition $(4)$ means that $f|_{S_{m}}$ and $g|_{S_{m}}$ are constants. Besides, conditions $(2)$ and $(3)$ imply that $\left(g-\overline{a}f\right)\big|_{S_{m}}$ and $\left(g-bf\right)\big|_{S_{m}}$ are constants, respectively. Thus we only need to consider $K_2$ under the condition $(1)$.

According to equalities (\ref{eq22}) and (\ref{eq23}),
we have by Lemma \ref{lem7} and Lemma \ref{lem8} that
$$S_{\phi_{z}}K_2-K_2T_{\phi_{z}}=(H_{g}k_z)\otimes(VH_{\overline{g}}k_z)-(H_{f}k_z)\otimes(VH_{\overline{f}}k_z),$$
and
$$\lim\limits_{z\rightarrow m}\left\|(H_{g}k_z)\otimes(VH_{\overline{g}}k_z)-(H_{f}k_z)\otimes(VH_{\overline{f}}k_z)\right\|=0.$$
Because $\left(\overline{g}-a\overline{f}\right)\big|_{S_{m}}$, $\left(g-bf\right)\big|_{S_{m}}$ are in $H^{\infty}|_{S_{m}}$ for some unimodular constants $a$ and $b$, we obtain that 
\begin{align*}
&\~~~~\~~~~\lim\limits_{z\rightarrow m}\left\|(H_{g}k_z)\otimes(VH_{\overline{g}}k_z)-(H_{f}k_z)\otimes(VH_{\overline{f}}k_z)\right\|\\
&=\lim\limits_{z\rightarrow m}\left\|(H_{bf}k_{z})\otimes(VH_{a\overline{f}}k_{z})-(H_{f}k_{z})\otimes(VH_{\overline{f}}k_{z})\right\|\\
&=\lim\limits_{z\rightarrow m}\left\|(ab-1)(H_{f}k_{z})\otimes(VH_{\overline{f}}k_{z})\right\|\\
&=\lim\limits_{z\rightarrow m}|ab-1|\cdot\|H_{f}k_{z}\|\cdot\|H_{\overline{f}}k_{z}\|\\
&=0.
\end{align*}
It follow that either $ab=1$, or $f|_{S_m}\in H^{\infty}|_{S_m}$, or $\overline{f}|_{S_m}\in H^{\infty}|_{S_m}$. 
If $f|_{S_m}$ or $\overline{f}|_{S_m}$ belongs to $H^{\infty}|_{S_m}$, then condition $(1)$ turns to be condition $(2)$ or $(3)$. Else if $ab=1$, then $b=\overline{a}$, which induces that
$\left(g-\overline{a}f\right)\big|_{S_m}\in H^{\infty}|_{S_m}$. It gives that $\left(g-\overline{a}f\right)\big|_{S_m}$ must be a constant. 

With the assumption that $\left(g-cf\right)\big|_{S_{m}}$ is a constant for some unimodular constant $c$, we need to prove that 
$$\left[(c-1)|f|^2+d\overline{f}-c\overline{d}f\right]\big|_{S_{m}}\in H^{\infty}|_{S_{m}}$$
where $d=\left(g-cf\right)\big|_{S_{m}}$. Since $K_{2}$ is compact, we obtain by Lemma \ref{lem6} that 
$$\lim\limits_{z\rightarrow m}\left\|K_{2}^{*}K_{2}-T_{\phi_{z}}^{*}K_{2}^{*}K_{2}T_{\phi_{z}}\right\|=0.$$
So we have by (\ref{eq24}) that 
$$\left\|K_{2}^{*}Vk_z\right\|_{2}^{2}\leq\left\|K_{2}^{*}S_{\phi_{z}}^{*}L_{z}\right\|+\left\|L_{z}^{*}S_{\phi_{z}}K_{2}\right\|+\left\|L_{z}^{*}L_{z}\right\|+\left\|K_{2}^{*}K_{2}-T_{\phi_{z}}^{*}K_{2}^{*}K_{2}T_{\phi_{z}}\right\|.$$
Recalling that $\lim\limits_{z\rightarrow m}\|L_{z}\|=0$ if $\left(g-cf\right)\big|_{S_{m}}$ is a constant, we conclude that
$$\lim\limits_{z\rightarrow m} \|K_{2}^{*}Vk_z\|_2=0.$$
Thus we have by Lemma \ref{lem9}, Lemma \ref{lem12} and (\ref{eq21}) that
$$\lim\limits_{z\rightarrow m}\left\|H_{\left[\left(c-1\right)|f|^{2}+\overline{f}\left(g-cf\right)-cf\overline{\left(g-cf\right)}\right]}k_{z}\right\|_{2}=0.$$
It follows that  
\begin{align*}
&\~~~~\~~~~\left[\left(c-1\right)|f|^{2}+d\overline{f}-c\overline{d}f\right]\big|_{S_{m}}\\
&=\left[\left(c-1\right)|f|^{2}+\overline{f}\left(g-cf\right)-cf\overline{\left(g-cf\right)}\right]\big|_{S_{m}}\in H^{\infty}\big|_{S_{m}},
\end{align*}
to complete this proof.
\end{proof}

\subsection*{Acknowledgments}
The author is grateful to the referee for valuable comments and suggestions.

\end{document}